\documentclass[12pt,english]{article}

\usepackage{amsmath,amsthm,amssymb,calc,eucal, latexsym}
\usepackage[dvips]{graphics, graphicx}

\usepackage[T2A]{fontenc}
\usepackage[english]{babel}
\usepackage{latexsym}
\pagestyle{myheadings}

\markboth{\centerline{\sc{\small }}}
{\centerline{\sc{\small I.~D.~Shkredov, A.~S.~Volostnov}}}

\renewcommand{\ge}{\geqslant}
\renewcommand{\le}{\leqslant}

\newcommand{\floor}[1]{\left[ {#1} \right]}
\newcommand{\ost}[1]{\left\{ {#1} \right\}}
\newcommand{\abs}[1]{\left| {#1} \right|}
\newcommand{\skob}[1]{\left( {#1} \right)}
\def\eps{\varepsilon}
\newtheorem{Th}{Theorem}
\newtheorem{Lemma}[Th]{Lemma}
\newtheorem*{Hyp}{Conjecture}
\newtheorem{note}{Remark}

\def\F{\Bbb F}
\def\C{\Bbb C}
\def\E{\mathsf {E}}

\author{I.~D.~Shkredov, A.~S.~Volostnov}
\title{ Sums of multiplicative characters with additive convolutions
\footnote{
This work is supported by grant Russian Scientific Foundation RSF 14--11--00433.}
}
\date{}
\begin{document}
\maketitle

\bigskip

\begin{center}
    Annotation.
\end{center}

{\it \small
    In the paper we obtain new estimates for binary and ternary sums of multiplicative characters with additive convolutions of characteristic functions of sets, having small additive doubling. In particular, we improve a result of M.--C. Chang. The proof uses Croot--Sisask almost periodicity lemma.
}

\section{Introduction}

Let $p$ be a prime number, $\F_p$ be the prime field and $\chi$ be a nontrivial multiplicative character modulo $p$.
In the paper we consider a problem of obtaining good upper bounds for the exponential sum 
\begin{equation}\label{f:def_sum}
    \sum_{a\in A,\, b\in B}\chi(a+b) \,,
\end{equation}
where $A,B$ are arbitrary subsets of the field  $\F_p$.
Exponential sums of such a type were studied by various authors, see e.g. \cite{Chang}, \cite{DE1}, \cite{Kar}--\cite{Kar2}.
There is a well--known hypothesis on sums (\ref{f:def_sum}) which is called the graph Paley conjecture, see the history of the question in \cite{Chang} or  \cite{Shkr_res}, for example.
\begin{Hyp}[Paley graph]
 Let $\delta>0$ be a real number, $A,B\subset\mathbb{F}_p$ be arbitrary sets with  $\abs{A}>p^{\delta}$ and $\abs{B}>p^\delta$. Then there exists a number $\tau=\tau(\delta)$ such that for any sufficiently 
 large prime number $p$ and all nontrivial characters $\chi$ 
 the following holds 
\begin{equation}
    \abs{\sum_{a\in A,\, b\in B}\chi(a+b)}<p^{-\tau}\abs{A}\abs{B} \,.
\label{Paley}\end{equation}
\end{Hyp}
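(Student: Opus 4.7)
The plan is to attack~(\ref{Paley}) via a dichotomy between the additive and multiplicative structure of $A$ and $B$, reducing eventually to a Burgess-type moment bound sharpened by Croot--Sisask almost periodicity. Put $S:=\sum_{a\in A,\,b\in B}\chi(a+b)$ and, for a target gain $\tau$, aim for $\abs{S}<p^{-\tau}\abs{A}\abs{B}$. H\"older in the variable $b$ gives, for any positive integer $k$,
\[
    \abs{S}^{2k}\le\abs{B}^{2k-1}\sum_{b\in B}\Bigl|\sum_{a\in A}\chi(a+b)\Bigr|^{2k}.
\]
Expanding the inner $2k$-th power and completing the outer sum to all of $\F_p$ via orthogonality, the problem reduces to Weil-type estimates for character sums of rational functions $R_{\vec a}(x)=\prod_{i\le k}(x+a_i)/\prod_{i\le k}(x+a'_i)$, with a completion loss of order $p\cdot(\abs{B}/p)$ weighted by the number of tuples $\vec a$. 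Combined with Burgess-style multiplicative shifts, this route yields~(\ref{Paley}) for $\abs{A},\abs{B}\gg p^{1/4+\eps}$.

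To push below density $p^{1/4}$, I would feed the Croot--Sisask lemma into the moment bound. That lemma produces a large symmetric set $T\subset\F_p$ such that the convolution $1_A\ast 1_B$ is almost invariant under translation by $T$; consequently, on average over $t\in T$, $\chi(a+b)$ can be replaced by $\chi(a+b+t)$. Averaging over $t$ should supply a shift set richer than any single cyclic subgroup, and re-inserting it into the Weil/Burgess moment step should convert the additive stability of $A+B$ into genuine multiplicative cancellation in the character sum.

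The main obstacle is the dichotomy itself. An unconditional argument must simultaneously handle the regime in which $A,B$ have strong additive structure (small doubling or small additive energy $\E(A,B)$, where Croot--Sisask produces long almost-periods and a sum--product style cancellation becomes available) and the regime in which no such structure is present (where the set $T$ output by Croot--Sisask is too small to beat the Weil completion loss, and, pathologically, the large values of $\bigl|\sum_a\chi(a+b)\bigr|$ might concentrate precisely on $B$). Unifying these two regimes for arbitrary $A,B$ of density $p^\delta$ with $\delta<1/4$ is the genuine content of the Paley graph conjecture: it appears to demand a mechanism that extracts multiplicative cancellation from sets carrying neither small additive doubling nor appreciable Fourier mass at any individual frequency.

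The expectation is therefore that the scheme above will deliver~(\ref{Paley}) only under auxiliary structural hypotheses on $A$ or $B$ --- which is precisely the route the present paper appears to take, via Croot--Sisask applied to sets of small additive doubling --- while a fully unconditional resolution at densities $p^\delta$ below $p^{1/4}$ would require a new input bridging additive and multiplicative structure that goes beyond what Croot--Sisask combined with Weil can supply.
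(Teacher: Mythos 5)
This statement is the Paley graph conjecture; the paper does not prove it and neither does anyone else --- it is stated only as an open problem motivating the results, so there is no proof of record to compare yours against. You correctly recognize this: your text is a programme plus an explanation of why the programme cannot close unconditionally, and that self-assessment is sound. The genuine obstruction you name (no mechanism to extract multiplicative cancellation from sets with neither additive structure nor concentrated Fourier mass) is exactly the reason the paper retreats to conditional statements, namely Theorems~\ref{main_theorem} and~\ref{ternarnic}, which assume small doubling of $A$ (and a sumset condition on $B$).

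One concrete inaccuracy in your sketch, though: the claim that H\"older plus Weil completion ``combined with Burgess-style multiplicative shifts'' yields~(\ref{Paley}) for arbitrary sets with $\abs{A},\abs{B}\gg p^{1/4+\eps}$ is false. For arbitrary sets the completed-moment argument forces one of the sets to have size exceeding $p^{1/2}$ (the $\sqrt p$ loss from Weil must be beaten by a single factor of $\abs{B}$), which is why the known unconditional range is $\abs{A}>p^{1/2+\delta}$, $\abs{B}>p^{\delta}$ (Karatsuba), and the paper stresses that even $\abs{A}\sim\abs{B}\sim p^{1/2}$ is open. Burgess shifts $a\mapsto a+xy$ are not available for arbitrary $A$: they need $A$ to be stable under adding a product set $A_0\cdot I$, which is exactly what the paper manufactures by invoking Freiman's theorem to place $A$ in a generalized progression, at the cost of the hypothesis $\abs{A+A}<K\abs{A}$ --- see~(\ref{f:Bergess}) and the bound~(\ref{A-A_0I}). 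Likewise your intended use of Croot--Sisask differs from the paper's: there the almost-periods $T$ are not fed into the Weil/Burgess moment step to enlarge the shift set, but are used to replace the binary sum $\sum\chi(a+b)$ by the ternary sum $\sum\chi(a+b-t)$, which is then handled by Theorem~\ref{ternarnic} together with the incidence/energy estimates of Theorems~\ref{energy} and~\ref{lines} (this is how the exponent $\frac{12}{31}$, below Chang's $\frac49$, is reached). So your conclusion --- conditional results only, no route to the conjecture itself --- is right, but the quantitative waypoints along your proposed route need the corrections above.
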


Let us say a few words about the name of the hypothesis.
The \emph{Paley graph} is the graph $G(V,E)$ with the vertex set  $V=\mathbb{F}_p$ and the set of edges $E$ such that $(a,b)\in E$ iff $a-b$ is a quadratic residue. To make the graph non--oriented we assume that  $p\equiv1\pmod 4$. 
Under these conditions if one put $B=-A$  in (\ref{Paley}) 
and  take $\chi$ equals the Legendre symbol
then an interesting statement would follow: the size of the maximal clique in the Paley graph (as well as its independent number) grows 
slowly than   $p^\delta$ for any positive $\delta$.

Unfortunately, at the moment we know few facts about the hypothesis.
 An affirmative answer 
 was obtained 
 just in the situation  $\abs{A}>p^{\frac12+\delta}$, $\abs{B}>p^{\delta}$, see \cite{Kar}---\cite{Kar2}. Even in the case $\abs{A}\sim\abs{B}\sim p^{\frac12}$ inequality~(\ref{Paley})
  is unknown, see \cite{Kar2}. 
  However, nontrivial bounds of sum (\ref{f:def_sum}) can be obtained for structural sets $A$ and $B$ with  weaker restrictions for the sizes of the sets, see~\cite{Chang}, \cite{FI}, \cite{Kar}.
Thus, in paper~\cite{Chang} Mei--Chu Chang proved such an estimate provided one of the sets 
 $A$ or $B$ has small sumset.
Recall that the {\it sumset} of two sets $X, Y \subseteq \mathbb{F}_p$ is the set
$$X+Y = \ost{x+y\,:\,x\in X, y\in Y} \,.$$
\begin{Th}[Chang]
Let $A,\,B\subset\mathbb{F}_p$ be arbitrary sets, $\chi$  be a nontrivial multiplicative character modulo $p$
and $K,\delta$ be positive numbers 
with 
\begin{gather*}
\abs{A}>p^{\frac49+\delta},\\
\abs{B}>p^{\frac49+\delta},\\
\abs{B+B}<K|B| \,.
\end{gather*}
Then there exists $\tau=\tau(\delta, K)>0$ such that the inequality
$$\abs{\sum_{a\in A,\, b\in B}\chi(a+b)}<p^{-\tau}\abs{A}\abs{B}$$
holds for all $p>p(\delta, K)$.
\label{t:Chang}
\end{Th}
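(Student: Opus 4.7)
The idea is to replace Chang's use of the Freiman structure theorem by the Croot--Sisask almost periodicity lemma applied to the convolution $r := 1_A * 1_B$; note that
\[
S = \sum_{x \in \F_p} \chi(x)\, r(x).
\]
Because $|B+B| \le K|B|$, for any fixed $\epsilon > 0$ and integer $k \ge 1$ the Croot--Sisask lemma produces a symmetric set $T \subseteq \F_p$ of size $|T| \ge K^{-C/\epsilon^{2}} |B|$ such that, for all $t_1, \dots, t_k \in T$, the convolution $r$ is $\epsilon$-invariant under translation by $t_1 + \cdots + t_k$ in an appropriate $L^q$ sense. Averaging the identity $r(x) \approx r(x + t_1 + \cdots + t_k)$ over $\vec t \in T^k$ and changing variables yields
\[
S \approx \sum_{y} r(y)\, \Psi(y), \qquad \Psi(y) = \frac{1}{|T|^{k}} \sum_{t_1, \dots, t_k \in T} \chi\big( y - (t_1 + \cdots + t_k) \big).
\]

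A Cauchy--Schwarz step gives $|S|^2 \lesssim \|r\|_2^{2} \cdot \sum_y |\Psi(y)|^2$, and the second factor can be expanded via the standard evaluation of the complete character sum
\[
\sum_{y \in \F_p} \chi(y - \sigma) \overline{\chi(y - \sigma')} = (p - 1)\cdot [\sigma = \sigma'] - [\sigma \neq \sigma'],
\]
which is an instance of A.~Weil's bound (here trivial since the rational function is a Möbius transform). Consequently
\[
\sum_y |\Psi(y)|^2 \le \frac{p \cdot \E^{(k)}(T)}{|T|^{2k}},
\]
where $\E^{(k)}(T)$ denotes the number of $2k$-tuples in $T^{2k}$ with equal sums of the first and second halves, i.e.\ the $k$-fold additive energy of $T$. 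Combined with $\|r\|_2^{2} = \E(A,B) \le |A|^{3/2}|B|^{3/2}$, the problem is reduced to exhibiting a good upper bound on $\E^{(k)}(T)$.

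The principal obstacle is precisely the control of $\E^{(k)}(T)$: since $T$ inherits additive structure from $B$ (which has small doubling and hence large additive energy), a naive bound on $\E^{(k)}(T)$ could be as bad as $\abs{T}^{2k-1}$, which ruins the estimate and gives only the Pólya--Vinogradov threshold $\alpha > 1/2$. One therefore has to either pass to a carefully chosen subset of $T$ (via a Balog--Szemerédi--Gowers or Katz--Koester type refinement) on which the iterated energy is close to its minimal possible value $|T|^{2k}/p^{k-1}$, or to invoke a bilinear form of the Croot--Sisask lemma that controls both sides simultaneously. After such a refinement, optimizing over $k$ (taken large in terms of $\delta$) and using the lower bound $|T| \ge K^{-C} |B|$ should propagate the small-doubling hypothesis on $B$ into a savings of roughly $p^{1/2 - \alpha}$ for every admissible $\alpha > 4/9$; the constant $4/9$ emerges from balancing the Weil-type term $p$, the energy $\E^{(k)}(T)$, and the Cauchy--Schwarz loss $\|r\|_2^{2}$.
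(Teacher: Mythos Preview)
The paper does not prove Theorem~\ref{t:Chang}; it is quoted from Chang's paper \cite{Chang} as background. The nearest thing the paper proves is Theorem~\ref{main_theorem}, whose argument uses Croot--Sisask once (to pass from a binary to a ternary sum) and then appeals to Theorem~\ref{ternarnic}, which in turn rests on Freiman's theorem and a Burgess-type \emph{multiplicative} amplification. Chang's own proof of Theorem~\ref{t:Chang} likewise proceeds via Freiman and Burgess, not via iterated Croot--Sisask.

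Your proposal has a genuine and, within this strategy, irreparable gap---the one you flag yourself. The set $T$ produced by the Croot--Sisask lemma satisfies $T\subseteq B-b_0$ for some $b_0$, so it inherits small doubling from $B$: $|T+T|\le |B+B|\le K|B|\le K\cdot K^{C/\eps^{2}}|T|$. Any set with bounded doubling $K'$ has $\E^{(k)}(T)\gg |T|^{2k-1}/(K')^{O(k)}$, which is hopelessly far from the $|T|^{2k}/p^{\,k-1}$ you would need. No passage to a subset can help: every dense subset of a set with small doubling still has small doubling and hence large iterated additive energy. Balog--Szemer\'edi--Gowers and Katz--Koester go in the opposite direction (they extract subsets with \emph{more} additive structure, not less). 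Consequently your Weil step gains nothing, and the argument collapses to the trivial bound.

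What is missing is the multiplicative ingredient that both Chang and the present paper use. One applies Freiman's theorem to embed $B$ in a generalized progression, and then exploits an inclusion of the shape $B - A_0\cdot I\subseteq P'$ with $A_0$ a small box and $I=[1,p^{\alpha}]$ a genuine interval (cf.\ (\ref{f:Bergess})--(\ref{A-A_0I})). After Burgess amplification and H\"older, the inner sum becomes $\sum_{y\in I}\chi(u_1+y)\overline{\chi}(u_2+y)$, whose $2r$-th moment is controlled by Weil (Lemma~\ref{davenport}); the dual count is now a \emph{multiplicative} energy of shifted sets, handled by sum--product input. Your purely additive averaging over $T^{k}$ cannot play the role of the interval $I$, precisely because $T$ sits inside a set of small additive doubling.
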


In our paper we refine Chang's assumption  $\abs{A}>p^{\frac49+\delta},\abs{B}>p^{\frac49+\delta}$ and prove the following theorem. 
\begin{Th}[Main result]\label{main_theorem}
Let $A,\,B\subset\mathbb{F}_p$ be sets and $K,L,\delta > 0$ be numbers with
\begin{gather}
\label{cond:first}
\abs{A}>p^{\frac{12}{31}+\delta},\\
\abs{B}>p^{\frac{12}{31}+\delta},\\
\abs{A+A}<K\abs{A},\\
\abs{A+B}<L\abs{B}.
\label{cond:last}
\end{gather}
Then for any nontrivial multiplicative character $\chi$ modulo $p$ one has 
\begin{equation}\label{f:main_theorem}
    \abs{\sum_{a\in A,\, b\in B}\chi(a+b)}\ll\sqrt{\frac{L\log 2K}{\delta\log p}} \cdot \abs A\abs B
\end{equation}
provided $p>p(\delta, K, L)$.
\end{Th}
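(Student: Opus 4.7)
\textbf{The plan is} to combine the Croot--Sisask almost periodicity lemma (exploiting $|A+A|\leq K|A|$) with Cauchy--Schwarz and the Weil bound for character sums of rational functions. Throughout, write $r(x):=(1_A \ast 1_B)(x)$, so that $S=\sum_x r(x)\chi(x)$; the function $r$ is supported on $A+B$ of size at most $L|B|$, bounded by $|A|$ pointwise, with $\sum_x r(x)=|A||B|$.

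The first step is to invoke Croot--Sisask: using $|A+A|\leq K|A|$, for integers $k\geq 1$ and a parameter $\eta>0$ there exists a symmetric set $T\ni 0$ of size $|T|\geq |A|\cdot (2K)^{-Ck/\eta^2}$ such that for every $t\in T$ the translate $\tau_t(\mu_A\ast 1_B)$ is within $\eta$ of $\mu_A\ast 1_B$ in $L^{2k}$. After the change of variables $y=x+t$, this produces the identity $\sum_{t\in T}\sum_x r(x+t)\chi(x)=\sum_y r(y)h(y)$ where $h(y):=\sum_{t\in T}\chi(y-t)$, and H\"older's inequality together with the almost periodicity yields $\bigl||T|\cdot S-\sum_y r(y)h(y)\bigr|\lesssim \eta L|A||B||T|$; the factor $L$ appears from estimating the support of $r(\cdot)-r(\cdot+t)$ inside $(A+B)\cup(A+B-t)$.

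The second step is to bound the main term $\frac{1}{|T|}\sum_y r(y)h(y)$. Apply Cauchy--Schwarz in the form $\bigl|\sum_y r(y)h(y)\bigr|^2\leq |A||B|\sum_y r(y)|h(y)|^2$ and expand $|h(y)|^2=\sum_{t_1,t_2\in T}\chi((y-t_1)/(y-t_2))$. The diagonal $t_1=t_2$ contributes $|T|\cdot|A||B|$. For each off-diagonal pair, $(y-t_1)/(y-t_2)$ is a non-trivial M\"obius map and $\chi$ is non-principal, so the Weil bound gives $|\sum_y\chi((y-t_1)/(y-t_2))|\ll \sqrt p$; a further Cauchy--Schwarz (or a direct estimate using $\|r\|_2^2\leq L|A|^2|B|$) transfers this unweighted cancellation to the $r$-weighted sum.

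Balancing the two contributions by choosing $k$ of order $\log p$ and $\eta^2$ of order $L\log(2K)/(\delta\log p)$ then yields the claimed estimate. The principal obstacle will be this last transfer from unweighted Weil cancellation to $r$-weighted cancellation in the off-diagonal terms: one must keep the loss polynomial in $L$, absorb it into the prefactor, and improve Chang's exponent $4/9$ to $12/31$ by squeezing out the sharpest possible use of the doubling constants $K$ and $L$, presumably via Pl\"unnecke--Ruzsa estimates of the shape $|kA-kA|\leq K^{O(k)}|A|$ when counting degenerate tuples in the moment expansion.
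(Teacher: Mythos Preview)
Your first step---applying Croot--Sisask with the doubling hypothesis $|A+A|\le K|A|$ to produce a set $T$ of almost periods and reduce $S$ to a ternary-type sum, picking up the factor $L^{1/2}$ from the support bound on $r$---is exactly what the paper does. The gap is entirely in your second step.

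A single Cauchy--Schwarz followed by Weil on $\sum_y r(y)|h(y)|^2$ cannot reach below the $\sqrt p$ threshold, and certainly not $p^{12/31}$. After your Cauchy--Schwarz the off-diagonal piece is $\sum_{t_1\neq t_2}\sum_y r(y)\chi\bigl((y-t_1)/(y-t_2)\bigr)$, and Weil only controls the \emph{complete} inner sum over $y\in\F_p$; there is no mechanism to transfer that cancellation to the weight $r$, which is supported on a set of size $\le L|B|$. A second Cauchy--Schwarz to complete the sum costs a factor $p^{1/2}$ (or $\|r\|_2$), and the resulting bound is nontrivial only for $|B|\gg p$. Replacing Cauchy--Schwarz by a $2k$-th moment and invoking Weil on the completed sum $\sum_y|h(y)|^{2k}$ (your hinted ``degenerate tuples'' count) does better, but still yields a bound that is nontrivial only once $|B|\gg\sqrt p$: the term coming from the $\sim|T|^{2k}\sqrt p$ generic tuples forces $p^{1/(4k)}\ll|B|^{1/(2k)}$. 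Pl\"unnecke--Ruzsa on $T$ does not help, because the obstruction lives in the $y$-variable.

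The paper does not try to estimate the ternary sum $\sum_{a,b,t}\chi(a+b-t)$ by such direct means. Instead it invokes the separately proved Theorem~\ref{ternarnic}, and \emph{that} is where the exponent $12/31$ is generated, via three ingredients absent from your plan: (i)~Freiman's theorem, embedding $A$ in a generalized progression $P$; (ii)~a Burgess-type amplification, introducing a short interval $I$ and a subprogression $A_0\subset P$ with $A-A_0 I$ contained in a dilate of $P$, which reduces matters to a $2r$-th moment of $\sum_{y\in I}\chi(u_1+y)\overline\chi(u_2+y)$ against a weight $\nu(u_1,u_2)$ counting solutions of $u_i=(b_i+c_i)/x$; and (iii)~sum--product input---the point--line incidence bound of Theorem~\ref{lines} and the multiplicative-energy estimate of Lemma~\ref{energy_esteem}---to control $\sum\nu(u_1,u_2)^2$ (Lemma~\ref{system_solution}). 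The arithmetic $3/4+7/6+2/3=31/12$ in that lemma is what produces $12/31$. Weil enters only at the end (Lemma~\ref{davenport}), applied to the \emph{short} sum over $I$, where completion is harmless. Without the Freiman/Burgess amplification and the incidence bounds you will not improve on Chang's $4/9$, let alone reach $12/31$.
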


 Of course our result is not a direct improvement of Chang's theorem because of the additional assumption $\abs{A+B}<L\abs B$.
 However it is applicable in the case $B=-A$ and hence in terms of the Paley graph our result is better.
On the other hand, the Pl\"{u}nnecke--Ruzsa triangle inequality (see Theorem \ref{t:Ruzsa_triangle} below) implies that the restriction $|A+B|\le L|B|$ gives us $|A+A| \le L^2 |A| \cdot (|B|/|A|)^2$ and hence if $A$ and $B$ have comparable sizes then it is enough  to assume condition (\ref{cond:last}) in Theorem \ref{main_theorem}.
Nevertheless the dependence on $K$ and $L$ in formula (\ref{f:main_theorem}) is asymmetric and thus the formulation of  our results in terms of these two parameters is reasonable.

Our approach uses a remarkable Croot--Sisask lemma \cite{Croot-Sisask} on almost periodicity of convolutions 
of the characteristic functions of sets.
Thanks to the result we reduce sum (\ref{f:main_theorem}) to a sum with more variables.
It seems like that it is the first application of the lemma in Analytical Number Theory.

In paper \cite{Hanson} B. Hanson obtained a bound for  so--called ternary sum.

\begin{Th}\label{t:Hanson}
    Let $A,\,B,\,C\subset\mathbb{F}_p$ be any sets, $\chi$  be a nontrivial multiplicative character modulo $p$.
    Suppose that for $\zeta > 0$ one has $|A|,|B|,|C| > \zeta \sqrt{p}$.
    Then
\begin{equation}\label{f:Hanson}
    \abs{\sum_{\substack{a\in A,\,b\in B,\,c\in C}}\chi(a+b+c)} = o_\zeta (|A||B||C|) \,.
\end{equation}
\end{Th}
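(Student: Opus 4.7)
The plan is to combine Cauchy--Schwarz on one variable with a high-moment estimate based on Weil's bound. Isolating $c$ and setting $\varphi(c) := \sum_{a \in A, b \in B}\chi(a+b+c) = \sum_s r_{A+B}(s)\chi(s+c)$, where $r_{A+B}(s) := |\{(a,b) \in A \times B : a+b = s\}|$, Cauchy--Schwarz combined with the orthogonality identity $\sum_{c \in \mathbb{F}_p}\chi(x+c)\overline{\chi(y+c)} = p\cdot\mathbf{1}[x=y] - 1$ gives
\[
|S|^2 \;\le\; |C|\sum_{c \in \mathbb{F}_p}|\varphi(c)|^2 \;=\; |C|\bigl(pE(A,B) - |A|^2|B|^2\bigr),
\]
where $E(A,B) = \sum_s r_{A+B}(s)^2$ is the additive energy. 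This bound is insufficient on its own near the critical regime $|A|, |B| \sim \sqrt{p}$, since $E(A,B)$ can be as large as $|A|^2|B|$ for AP-like sets, giving only $|S| = O(|A||B||C|)$.

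To extract $o(1)$ savings, I would pass to higher moments. H\"older's inequality gives
\[
|S|^2 \;\le\; |C|^{\,2-1/r}\Bigl(\sum_{c \in \mathbb{F}_p}|\varphi(c)|^{2r}\Bigr)^{1/r}
\]
for any integer $r \ge 1$. Expanding $|\varphi(c)|^{2r}$ and interchanging sums, the relevant inner quantity is
\[
\sum_c \chi\!\left(\frac{\prod_{i\le r}(c+s_i)}{\prod_{i>r}(c+s_i)}\right),
\]
which by Weil's theorem is $O(r\sqrt{p})$ whenever the rational function is not a $d$-th power in $\mathbb{F}_p(c)$ ($d$ being the order of $\chi$), and is $\le p$ otherwise. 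The ``diagonal'' tuples, in which the multisets $\{s_i\}_{i\le r}$ and $\{s_i\}_{i>r}$ coincide, contribute at most $p\cdot r!\cdot E(A,B)^r$, yielding
\[
\sum_c|\varphi(c)|^{2r} \;\le\; 2r\sqrt{p}\,(|A||B|)^{2r} + p\,r!\,E(A,B)^r.
\]

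The main obstacle is the final optimization, which is critically tight at the threshold $|C| \sim \sqrt{p}$. After taking the $1/r$-th root, the Weil contribution produces a factor $(2r\sqrt{p}/|C|)^{1/r} \le (2r/\zeta)^{1/r}$, which tends only to $1$ — not to $0$ — as $r \to \infty$, so a direct implementation yields only $|S| \le C_\zeta \cdot |A||B||C|$ with a bounded constant. Going below this constant, and so obtaining true $o_\zeta(|A||B||C|)$, appears to require a refinement exploiting that \emph{all three} sets exceed $\zeta\sqrt{p}$: one natural route is to keep the outer Cauchy--Schwarz restricted to $c \in C$ and evaluate $\sum_{c \in C}|\varphi(c)|^2$ by separating the diagonal term $|C|E(A,B)$ from off-diagonal sums $\sum_{c \in C}\chi((c+x)/(c+y))$, which for generic $x \ne y \in A+B$ exhibit cancellation by a second application of Weil. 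Balancing with a slowly growing $r = r(p)$ then produces the required decay $|S|/(|A||B||C|) \to 0$. Alternatively, one can invoke the Croot--Sisask almost-periodicity lemma on $1_A * 1_B$ — as is done for Theorem~\ref{main_theorem} in the present paper — to replace one of the sets by a structurally simpler near-translation-invariant average on which the character sum averages trivially.
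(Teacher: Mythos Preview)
Your proposal does not close. You correctly identify the obstacle: the Weil-moment bound
\[
\sum_{c}|\varphi(c)|^{2r}\;\le\;2r\sqrt{p}\,(|A||B|)^{2r}+p\,r!\,E(A,B)^{r}
\]
only yields, after H\"older, a main term of size $(|A||B||C|)^{2}(2r/\zeta)^{1/r}$, and $(2r/\zeta)^{1/r}\to 1$ rather than $0$. But the two refinements you sketch do not rescue this. Restricting the outer square to $c\in C$ replaces the $p$ in the diagonal by $|C|$, but the off-diagonal sums $\sum_{c\in C}\chi((c+x)/(c+y))$ are only $O(\sqrt{p})$ by completing and Weil --- so you save nothing over summing on all of $\mathbb{F}_p$, and no second iteration of Weil over $x,y\in A+B$ is available without further structure. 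Invoking Croot--Sisask on $1_A*1_B$ requires a small-doubling hypothesis on $A$ (or on $B$), which is exactly what is \emph{not} assumed here; this is precisely the gap between Theorem~\ref{t:Hanson} and Theorem~\ref{ternarnic}.

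The paper's argument is entirely different and is structural rather than analytic. One argues by contradiction: if $\bigl|\sum\chi(a+b+c)\bigr|\ge\eps|A||B||C|$, two applications of Cauchy--Schwarz force $\E^{+}(A)\gg(\eps\zeta)^{4}|A|^{3}$ and $\E^{+}(B,C)\gg(\eps\zeta)^{2}(|B||C|)^{3/2}$. The Balog--Szemer\'edi--Gowers theorem then produces large subsets $A'\subseteq A$, $B'\subseteq B$, $C'\subseteq C$ with $|A'+A'|\ll(\eps\zeta)^{-M}|A'|$ and $|B'+C'|\ll(\eps\zeta)^{-M}(|B'||C'|)^{1/2}$. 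These subsets now satisfy the hypotheses of Theorem~\ref{ternarnic}, whose power saving contradicts the assumed lower bound after a simple averaging over cosets of $A',B',C'$. The missing idea in your attempt is this passage through additive energy and BSG to manufacture the small-doubling structure that the direct character-sum machinery needs.
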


Using the method of the proof of Theorem \ref{t:Chang} as well as some last results from sum---product theory \cite{Shkr2},
we obtain an upper  bound for the ternary sum in the case of sets with small additive doubling.

\begin{Th}\label{ternarnic}
Suppose that $A,\,B,\,C\subset\mathbb{F}_p$ are arbitrary sets and $K,L,\delta > 0$  are real numbers such that
\begin{gather}
\label{0}\abs{A},\abs{B},\abs{C}>p^{\frac{12}{31}+\delta},\\
\label{1}\abs{A+A}<K|A|,\\
\label{-1}\abs{B+C}<L\abs B.
\end{gather}
Then  there exists $\tau=\tau(\delta, K)=\delta^2(\log 2K)^{-3+o(1)}$ with the property 
$$\abs{\sum_{\substack{a\in A,\,b\in B,\,c\in C}}\chi(a+b+c)}<p^{-\tau}\abs A\abs B\abs C$$
for all $p>p(\delta, K, L)$. Here $\chi$  is a nontrivial multiplicative character modulo $p$.
\end{Th}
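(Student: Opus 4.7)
The plan is to mimic the method of Chang's Theorem~\ref{t:Chang}: by applying Cauchy--Schwarz in the variable $a \in A$ I reduce the ternary character sum to a bilinear expression whose inner factor is a character sum over a translate of $A$, and then dispatch that inner sum using a sum--product--flavoured estimate from \cite{Shkr2}. Set $f(s) = \abs{\{(b,c) \in B \times C : b+c = s\}}$, so that $\sum_s f(s) = \abs{B}\abs{C}$ and $f$ is supported on $B+C$, and write
\begin{equation*}
    S := \sum_{a,b,c} \chi(a+b+c) = \sum_{a \in A} \sum_s f(s) \chi(a+s) \,.
\end{equation*}
Cauchy--Schwarz over $a$ then yields
\begin{equation*}
    \abs{S}^2 \le \abs{A} \sum_{s_1, s_2} f(s_1) f(s_2) \sum_{a \in A} \chi(a+s_1) \overline{\chi(a+s_2)} \,.
\end{equation*}

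I split this sum into the diagonal $s_1 = s_2$ and the off-diagonal $s_1 \ne s_2$. The diagonal contribution equals $\abs{A}^2 E(B,C)$, where $E(B,C) = \sum_s f(s)^2$ is the additive energy; since $f(s) \le \min(\abs{B},\abs{C})$ one has $E(B,C) \le \min(\abs{B},\abs{C}) \cdot \abs{B}\abs{C}$, so the diagonal piece is at most $(\abs{A}\abs{B}\abs{C})^2 / \max(\abs{B},\abs{C}) \le (\abs{A}\abs{B}\abs{C})^2 \cdot p^{-12/31-\delta}$, well below the target bound once $p$ is large in terms of $\delta, K, L$.

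The substance is the off-diagonal piece. For $s_1 \ne s_2$ I set $\lambda = s_1 - s_2 \ne 0$ and substitute $y = a + s_2$; the inner sum becomes $\sum_{y \in A + s_2} \chi((y+\lambda)/y)$. The translate $A + s_2$ has the same additive doubling $K$ as $A$, and the key input will be the character sum estimate of \cite{Shkr2}: for every $A' \subset \F_p$ with $\abs{A'+A'} \le K \abs{A'}$ and $\abs{A'} > p^{12/31+\delta}$, and for every $\lambda \ne 0$,
\begin{equation*}
    \left| \sum_{y \in A'} \chi\!\left(\frac{y+\lambda}{y}\right) \right| \ll \abs{A'} \cdot p^{-2\tau},
\end{equation*}
with $\tau = \delta^2 (\log 2K)^{-3+o(1)}$, provided $p$ is sufficiently large. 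The threshold $12/31$ (replacing Chang's $4/9$) together with the explicit shape of $\tau$ is the sole source of novelty beyond Chang's layout and comes straight from the sum--product machinery. Summing against $\sum_{s_1, s_2} f(s_1) f(s_2) = (\abs{B}\abs{C})^2$ bounds the off-diagonal contribution by $(\abs{A}\abs{B}\abs{C})^2 p^{-2\tau}$.

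Combining the two estimates gives $\abs{S}^2 \ll (\abs{A}\abs{B}\abs{C})^2 p^{-2\tau}$ once $p \ge p(\delta, K, L)$, and the claimed inequality follows after absorbing the harmless $\sqrt{2}$ into the $o(1)$ of the exponent. The main obstacle is precisely the invocation of the quoted character sum bound for $A'$ at the threshold $p^{12/31+\delta}$ with the explicit dependence $\tau \sim \delta^2 (\log 2K)^{-3+o(1)}$; given that input, the remainder of the argument is Cauchy--Schwarz plus book-keeping. The parameter $L$ plays only the auxiliary role of guaranteeing that $p$ is large enough for the diagonal error to be swallowed by the main term.
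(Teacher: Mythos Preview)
Your argument has a genuine gap at its central step: the ``character sum estimate of \cite{Shkr2}'' you invoke,
\[
\left| \sum_{y \in A'} \chi\!\left(\frac{y+\lambda}{y}\right) \right| \ll |A'|\, p^{-2\tau}
\quad\text{for } |A'|>p^{12/31+\delta},\ |A'+A'|\le K|A'|,
\]
does not appear in \cite{Shkr2}. That paper (as cited here via Theorems~\ref{energy} and~\ref{lines}) supplies combinatorial growth bounds --- an energy estimate for $a_1(b_1+c_1)=a_2(b_2+c_2)$ and a point--line incidence bound for Cartesian products --- not character sum estimates. Turning those incidence bounds into a character sum saving at the threshold $p^{12/31}$ is precisely the work that Theorem~\ref{ternarnic} carries out; your black box is, in effect, the theorem you are trying to prove. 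A further symptom: your argument never uses the hypothesis $|B+C|<L|B|$ (your diagonal bound is $(\abs A\abs B\abs C)^2 p^{-12/31-\delta}$, independent of $L$, and your off-diagonal bound sums $f(s_1)f(s_2)$ trivially). If the reasoning were valid it would deliver the conclusion with no assumption whatsoever on $B$ and $C$ beyond their sizes, which is strictly stronger than what the paper claims and is not known.

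What the paper actually does is a Burgess-type amplification. Freiman's theorem places $A$ inside a generalized arithmetic progression $A_1$ of dimension $d\le C(K)$; one then chooses a small sub-GAP $A_0$ and a short interval $I$ so that $A-A_0I$ is still essentially $A_1$. Averaging the sum over shifts $xy$ with $x\in A_0$, $y\in I$, then applying Cauchy--Schwarz and H\"older, separates a Weil-type moment over $I$ (handled by Lemma~\ref{davenport}) from a count of solutions to the system $\frac{b_1+c_1}{x}=\frac{b_1'+c_1'}{x'}$, $\frac{b_2+c_2}{x}=\frac{b_2'+c_2'}{x'}$. It is this count (Lemma~\ref{system_solution}) that consumes both the incidence input from \cite{Shkr2} and the hypothesis $|B+C|<L|B|$, and it is the balancing of the Weil loss against this count that produces the exponent $12/31$ and the saving $\tau=\delta^2(\log 2K)^{-3+o(1)}$. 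Your Cauchy--Schwarz over $a$ followed by a direct appeal to a nonexistent estimate skips all of this machinery.
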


From the proof of Theorem \ref{ternarnic} it follows that a nontrivial upper bound in formula (\ref{f:Hanson}) requires the restriction
$\zeta \gg \exp(-(\log p)^{\alpha})$, where $\alpha >0$ is an absolute constant.

\section*{Definitions and notation}


Recall that the (Minkowski) {\it sumset}  of two sets $A$ and $B$ from the field $\mathbb{F}_p$ 
is the set 
$$A+B = \ost{a+b\,:\,a\in A, b\in B} \,.$$
In a similar way one can define the {\it difference}, the {\it product} and the {\it quotient set} of two sets $A$ and $B$ as
$$A-B = \ost{a-b\,:\,a\in A, b\in B};$$
$$AB = \ost{ab\,:\,a\in A, b\in B};$$
$$\frac AB = \ost{ab^{-1}\,:\,a\in A, b\in B, b\neq 0} \,.$$
Also for an arbitrary $g\in \mathbb{F}_p$ by  $g+A$ and $gA$ denote the sumset $\ost{g}+A$
and the product set
$\ost{g} \cdot A$, correspondingly.
We need the remarkable  Pl\"{u}nnecke--Ruzsa triangle inequality (see~\cite{TV}, p.79 and section 6.5 here).

\begin{Th}[Pl\"{u}nnecke--Ruzsa]
For any nonempty sets $A,B,C$ one has
$$\abs{A-C}\le\frac{\abs{A-B}\abs{B-C}}{\abs B}$$
and
$$\abs{A+C}\le\frac{\abs{A+B}\abs{C+B}}{\abs B} \,.$$
\label{t:Ruzsa_triangle}
\end{Th}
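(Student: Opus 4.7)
The plan is to use the injection method of Ruzsa, which reduces an inequality of the form $|X|\cdot|Y|\le|U|\cdot|V|$ to constructing an injection $X\times Y\hookrightarrow U\times V$. I would handle the difference version directly, then obtain the sum version by an analogous construction after a sign change.

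For the difference version, the first step is to fix, for each $d\in A-C$, a particular decomposition $d=a(d)-c(d)$ with $a(d)\in A$ and $c(d)\in C$. The main device is the map
\[
\phi\colon(A-C)\times B\longrightarrow(A-B)\times(B-C),\qquad \phi(d,b)=\bigl(a(d)-b,\;b-c(d)\bigr).
\]
It plainly lands in the claimed product, so only injectivity remains. That follows from the key observation that the two coordinates of $\phi(d,b)$ sum to $a(d)-c(d)=d$: hence $d$ is determined by the image, the representative $a(d)$ is then fixed by our choice, and $b$ is determined from the first coordinate as $b=a(d)-\pi_1\phi(d,b)$. An injection between finite sets yields $|A-C|\cdot|B|\le|A-B|\cdot|B-C|$, which is the first inequality after dividing by $|B|$.

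For the sum version, I would run the same argument with signs adjusted so that the identity ``sum of coordinates equals the input'' still holds: equivalently, one applies the difference inequality just proved to the triple $(A,-B,-C)$ in place of $(A,B,C)$ and then invokes $|X+Y|=|X-(-Y)|$ together with $|{-Z}|=|Z|$ to rewrite the resulting bound as $|A+C|\cdot|B|\le|A+B|\cdot|B+C|$, which is the second claim.

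The only technical point in the whole argument is the injectivity of $\phi$ in the difference case; it rests on the single arithmetic identity $(a(d)-b)+(b-c(d))=d$. Once this is verified, everything else---the passage to the sum form and the bookkeeping of the codomain---is routine, and I would expect no further obstacles.
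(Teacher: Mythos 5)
Your first half is fine: the injection $\phi(d,b)=(a(d)-b,\,b-c(d))$ with a fixed decomposition $d=a(d)-c(d)$ is exactly Ruzsa's classical argument, and it correctly yields $\abs{A-C}\abs{B}\le\abs{A-B}\abs{B-C}$. (The paper itself gives no proof; it quotes the statement from Tao--Vu.)

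The second half has a genuine gap. Applying the difference inequality to the triple $(A,-B,-C)$ gives $\abs{A-(-C)}\le\abs{A-(-B)}\,\abs{(-B)-(-C)}/\abs{-B}$, i.e. $\abs{A+C}\le\abs{A+B}\,\abs{C-B}/\abs{B}$: the middle set enters once with each sign, so one of the three sets necessarily remains a \emph{difference} set ($C-B$, not $C+B$), and no choice of signs fixes this --- to turn $\abs{A-B'}$ into $\abs{A+B}$ you need $B'=-B$, while to turn $\abs{B'-C'}$ into $\abs{B+C}$ you need $B'=B$. The direct injection also fails for sums: writing $d=a(d)+c(d)$ and mapping $(d,b)\mapsto(a(d)+b,\,c(d)+b)$ lands in $(A+B)\times(C+B)$, but the coordinates now determine only $a(d)-c(d)$, not $d$, so injectivity breaks down. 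The sum version $\abs{A+C}\le\abs{A+B}\abs{C+B}/\abs{B}$ is a genuinely deeper statement: it is not a formal consequence of the difference triangle inequality and is usually deduced from Pl\"unnecke's inequality (or Ruzsa's covering lemma, or Petridis' argument) --- which is precisely why the paper names it Pl\"unnecke--Ruzsa and cites Section 6.5 of Tao--Vu rather than the elementary injection. You need to supply that additional input to prove the second inequality.
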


Besides, we denote
$$[a,b]=\ost{i\in\mathbb{Z}\,:\,a\le i\le b} \,.$$
Let  $A$ be an arbitrary set.
We write 
$A(x)$ for the characteristic function of $A$.
In other words 
$$A(x) = \begin{cases}
1, &\text{if  $x\in A$;}\\
0, &\text{otherwise.}
\end{cases}$$
We need in the notion of the {\it convolution} of two functions $f, g : \F_p \to \C$
$$(f\ast g)(x)=\sum_{y}f(y)g(x-y) \,.$$
$L_p$--norm of a function $f :\F_p \to \C$ is given by 
$$\|f\|_{L_p}=\skob{\sum_{x} |f(x)|^p}^{\frac 1p}  \,.$$
Also we will use 
the {\it multiplicative energy} of a set $A$, see \cite{TV}
$$\E(A) = \E^\times (A) = \abs{\ost{(a_1,a_2,a_3,a_4)\in A^4\,:\,a_1a_2=a_3a_4}} $$
and the {\it additive energy} of $A$ \cite{TV}
$$
    \E^{+} (A) = \abs{\ost{(a_1,a_2,a_3,a_4)\in A^4\,:\,a_1+a_2=a_3+ a_4}} \,.
$$

A {\it generalized arithmetic progression of dimension $d$} is a set $P\subset\mathbb{F}_p$
of the form 
\begin{equation}P=a_0+\ost{\sum_{j=1}^dx_ja_j\,:\,x_j\in\floor{0,H_j-1}} \,,\label{representation}\end{equation}
where  $a_0,a_1,\ldots,a_d$ are 
some elements from $\mathbb{F}_p$; $P$ is said to be {\it proper} if all of the sums in (\ref{representation}) 
are distinct (in the case $\abs P=\prod_{j=1}^d H_j$).

\begin{Th}[Freiman]
 For any set $A\subseteq \mathbb{F}_p$ such that $\abs{A+A}\le K\abs A$ there is a generalized arithmetic progression $P$ of dimension $d$ containing  $A$ such that $d\le C(K)$ and $\abs P\le e^{C(K)}\abs A$.
 Here $C(K) >0$ is a constant which depends on $K$ only but not on the set $A$.
\end{Th}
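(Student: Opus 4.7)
The plan is to prove this along the standard Bogolyubov--Ruzsa route to Freiman's theorem in $\F_p$: first model $A$ inside a group where it has positive density, locate a large Bohr set inside an iterated sumset of the model, extract a proper generalized arithmetic progression from that Bohr set by a geometry of numbers argument, and finally descend back to $A$ by Ruzsa covering.

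The first step is the Freiman $8$-modelling lemma, which replaces $A$ by a Freiman $8$-isomorphic copy $A'\subseteq\mathbb{Z}/N\mathbb{Z}$ with $\abs{A'}\ge N/(8K)$, so that $A'$ has density at least $(8K)^{-1}$ in the model group. Iterating the Pl\"{u}nnecke--Ruzsa triangle inequality (Theorem~\ref{t:Ruzsa_triangle}) in the model yields $\abs{2A'-2A'}\le K^{O(1)}\abs{A'}$. Bogolyubov's lemma now applies: writing
$$(A'\ast A'\ast(-A')\ast(-A'))(x)=\sum_{\xi}\abs{\widehat{A'}(\xi)}^4\mathrm{e}^{2\pi ix\xi/N} \,,$$
one takes $\Lambda=\ost{\xi\,:\,\abs{\widehat{A'}(\xi)}\ge\eta\abs{A'}}$ for suitable $\eta$ of size $K^{-O(1)}$, obtains $\abs\Lambda\le K^{O(1)}$ from Parseval together with the density lower bound $\abs{A'}\ge N/(8K)$, and then a short estimate shows that the Bohr set $B(\Lambda,1/4)$ is contained in $2A'-2A'$.

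Next, I would extract a proper generalized arithmetic progression from the Bohr set via geometry of numbers. Let $d_0=\abs\Lambda$ and consider the lattice $L=\ost{v\in\mathbb{Z}^{d_0}\,:\,\sum_{\xi\in\Lambda}v_\xi\xi\equiv 0\pmod N}$ of covolume $N$. Applying Minkowski's second theorem to $L$ yields a basis whose successive minima are comparable, and the dual construction produces a proper GAP $P_0\subseteq B(\Lambda,1/4)\subseteq 2A'-2A'$ of dimension $d_0\le K^{O(1)}$ with
$$\abs{P_0}\ge(c/d_0)^{d_0}\cdot N\ge e^{-K^{O(1)}}\cdot\abs{A'} \,.$$

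Finally, Pl\"{u}nnecke--Ruzsa gives $\abs{A'+P_0}\le\abs{3A'-2A'}\le K^{O(1)}\abs{A'}\le e^{K^{O(1)}}\abs{P_0}$, so Ruzsa's covering lemma produces a set $X\subseteq\mathbb{Z}/N\mathbb{Z}$ with $\abs X\le e^{K^{O(1)}}$ and $A'\subseteq X+(P_0-P_0)$. Adjoining each element of $X$ as a new generator (with two admissible values) embeds $A'$ into a proper progression of dimension $d_0+\abs X\le C(K)$ and size at most $2^{\abs X+d_0}\abs{P_0}\le e^{C(K)}\abs{A'}=e^{C(K)}\abs A$; pulling this back along the Freiman isomorphism yields the desired progression $P\supseteq A$ in $\F_p$. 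I expect the main obstacle to be the geometry of numbers step: extracting a proper $k$-dimensional GAP of nearly full density from a rank-$k$ Bohr set is the delicate heart of the argument and is responsible for essentially all of the exponential loss in the final constant $C(K)$.
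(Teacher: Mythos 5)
The paper does not prove this statement at all: Freiman's theorem is invoked as a black box, with the quantitative refinement $C(K)=(\log 2K)^{3+o(1)}$ attributed to Sanders' survey, so there is no in-paper argument to compare yours against. Your sketch is the standard Ruzsa/Green--Ruzsa route (modelling, Bogolyubov, geometry of numbers on a Bohr set, Ruzsa covering), and as a plan it is essentially sound; it yields $C(K)=K^{O(1)}$, which is enough for the statement as quoted, though not for the $(\log 2K)^{3+o(1)}$ bound mentioned afterwards --- that requires Sanders' much more involved argument via the Croot--Sisask/Bogolyubov--Ruzsa machinery, not the classical proof you outline.

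Two points in your write-up need repair. First, the modelling lemma does not give a Freiman $8$-isomorphic copy of \emph{all} of $A$ inside a cyclic group of size $O_K(\abs A)$; it only models a subset $A''\subseteq A$ with $\abs{A''}\gg\abs A$ (a proportion depending on the isomorphism order). This is harmless, but it forces the second correction: you should pull the proper progression $P_0\subseteq 2A''-2A''$ back to $\mathbb{F}_p$ \emph{first} (this is exactly what the order-$8$ isomorphism is for, since an $8$-isomorphism of $A''$ induces a $2$-isomorphism on $2A''-2A''$, which transports GAPs), and only then run the Ruzsa covering argument, in $\mathbb{F}_p$, to cover the whole of $A$ (not just the modelled part $A'$) by translates of $P_0-P_0$, using Pl\"unnecke--Ruzsa to bound $\abs{A+P_0}$. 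As written, you cover $A'$ inside the model and then pull back the containment $A'\subseteq X+(P_0-P_0)$, which an $8$-isomorphism does not justify (the right-hand side involves too long a sum of elements of $A'$), and you never address the unmodelled part of $A$. Finally, properness of the resulting progression is not preserved when you adjoin the covering translates as new generators, but the statement only asks for a GAP containing $A$, so this costs nothing.
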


It is known that the constant $C(K)$ can be taken equal $C(K) = (\log 2K)^{3+o(1)}$, see \cite{Sanders_survey}.

\bigskip

Also let us remind that a  multiplicative character $\chi$  modulo $p$ is a homomorphism from $\mathbb{F}^{*}_p$ into 
the unit circle of the complex plane.
The character $\chi_0\equiv1$ is called trivial and the  conjugate to a character 
 $\chi(x)$ is the character  $\overline{\chi}(x)=\overline{\chi(x)}=\chi(x^{-1})$.
The order of a character $\chi$ is the least positive integer $d$ such that $\chi^d=\chi_0$. One can read about properties of multiplicative characters in \cite{Stepanov} or \cite{IK}.

We need a variant of Andr\'{e} Weil's result (see  Theorem~11.23 in~\cite{IK}).

\begin{Th}[Weil]
Let $\chi$ be a nontrivial multiplicative character modulo $p$ of order $d$. 
Suppose that a polynomial $f$ has $m$ distinct roots and there is no polynomial $g$ such that  $f=g^d$.
Then
$$\abs{\sum_{x\in\mathbb{F}_p}\chi\skob{f(x)}}\le(m-1)\sqrt p \,.$$
\end{Th}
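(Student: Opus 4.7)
The plan is to reduce the bound on $S := \sum_{x \in \F_p} \chi(f(x))$ to Weil's Riemann hypothesis for curves over finite fields, which is the classical route to his character-sum estimate. First I would relate $S$ to the number of $\F_p$-rational affine points on the Kummer curve $\mathcal C\colon y^d = f(x)$. For any $c \in \F_p^*$, orthogonality of the characters of order dividing $d$ gives $\#\{y \in \F_p : y^d = c\} = \sum_{\psi^d = \psi_0} \psi(c)$. Summing over $x \in \F_p$, separating the zeros of $f$, and pulling off the contribution of the trivial character (which equals $p$) yields
\begin{equation*}
\#\mathcal C^{\mathrm{aff}}(\F_p) \;=\; p + \sum_{j=1}^{d-1} \sum_{x \in \F_p} \chi^j(f(x)) \,.
\end{equation*}

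Next, the hypothesis that $f \ne g^d$ for every polynomial $g$ is precisely what makes $\mathcal C$ absolutely irreducible (equivalently, the Kummer extension $\F_p(x,y)/\F_p(x)$ is a field of degree exactly $d$). Passing to the smooth projective model $\widetilde{\mathcal C}$ and applying Riemann--Hurwitz to the degree-$d$ cover $\widetilde{\mathcal C} \to \Bbb P^1$, ramified precisely above the $m$ roots of $f$ (and possibly above $\infty$), I would decompose the zeta function of $\widetilde{\mathcal C}$ under the natural $\mu_d$-action. This shows that for each nontrivial $\chi^j$ the associated $L$-function $L(\chi^j, T)$ is a polynomial of degree at most $m-1$, and
\begin{equation*}
\sum_{x \in \F_p} \chi^j(f(x)) \;=\; -\sum_i \alpha_i^{(j)} \,,
\end{equation*}
where the $\alpha_i^{(j)}$ are its inverse roots.

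Third, Weil's Riemann hypothesis for curves asserts $|\alpha_i^{(j)}| = \sqrt{p}$ for every $i$ and every nontrivial $j$. Since $L(\chi^j, T)$ has degree at most $m-1$, the triangle inequality gives $\bigl|\sum_x \chi^j(f(x))\bigr| \le (m-1)\sqrt{p}$ for each $j$, and specialising $j = 1$ recovers the theorem.

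The main obstacle is of course Weil's Riemann hypothesis for curves itself. I would not reproduce its proof but would invoke it via Theorem~11.23 of~\cite{IK}. Among self-contained routes, Stepanov's polynomial method combined with Bombieri's refinement is the shortest, yet even this requires a nontrivial algebraic-geometric preparation (the Riemann--Hurwitz genus computation, the analysis of points above $\infty$, and the construction of auxiliary polynomials of controlled degree), so for the present paper the estimate $(m-1)\sqrt{p}$ is best regarded as a black box.
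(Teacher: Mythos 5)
The paper never proves this statement: it is imported as a known result with a pointer to Theorem~11.23 of \cite{IK}, so there is no internal proof to compare against. Your sketch is the standard reduction of the character-sum estimate to Weil's Riemann hypothesis for curves, and since you end by invoking that deep input through the very same citation, your treatment is in substance equivalent to the paper's (a black box), just with the classical mechanism spelled out; that is perfectly reasonable here. One genuine inaccuracy in the sketch, though: the claim that the hypothesis ``$f\ne g^d$ for every polynomial $g$'' is \emph{precisely} what makes the Kummer curve $y^d=f(x)$ absolutely irreducible is false. Irreducibility requires $f$ not to be (up to a constant) an $e$-th power for every divisor $e>1$ of $d$; for instance with $d=4$ and $f=g^2$, $g$ not a square, the curve splits as $(y^2-g)(y^2+g)=0$ although $f$ is not a fourth power. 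What the hypothesis actually guarantees is that $\chi\circ f$ is a nontrivial multiplicative character of the function field $\mathbb{F}_p(x)$, equivalently that the $\chi$-isotypic $L$-function $L(\chi,T)$ is a genuine polynomial of degree at most $m-1$ whose inverse roots have modulus $\sqrt p$. Since your final bound only uses the component $j=1$, the argument survives once it is phrased in terms of this $L$-function (or of the quotient of the possibly reducible cover corresponding to $\chi$) rather than in terms of absolute irreducibility of the full curve; the detour through the total point count and the $\mu_d$-decomposition should be corrected accordingly, or simply bypassed by attaching $L(\chi,T)$ to the character $\chi\circ f$ directly, as in \cite{IK}.
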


Also we will use the H\"{o}lder inequality.

\begin{Lemma}[The H\"{o}lder inequality]
For any positive $p$ and $q$ such that  $\frac1p+\frac1q=1$ one has
$$\abs{\sum_{k=1}^{n} x_ky_k}\le\skob{\sum_{k=1}^n\abs{x_k}^p}^{\frac1p}\skob{\sum_{k=1}^n\abs{y_k}^q}^{\frac1q} \,.$$
In particular, we have the Cauchy--Schwarz inequality
$$\skob{\sum_{k=1}^{n}x_ky_k}^2\le\skob{\sum_{k=1}^nx_k^2}\skob{\sum_{k=1}^ny_k^2} \,.$$
\end{Lemma}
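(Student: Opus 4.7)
The plan is to adapt Chang's strategy from the proof of Theorem \ref{t:Chang} to the ternary setting: the role of the small-doubling set shifts from $B$ to $A$, and the third variable is absorbed using the bipartite hypothesis $\abs{B+C} < L\abs{B}$ together with sum--product estimates from \cite{Shkr2}. These two ingredients combined are what improve Chang's threshold $p^{4/9}$ to $p^{12/31}$.

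First, I apply the Cauchy--Schwarz inequality in the outer variable $a$:
$$\abs{S}^{2} \le \abs{A} \sum_{a \in A} \abs{\sum_{b \in B,\,c \in C}\chi(a+b+c)}^{2} = \abs{A} \sum_{a \in A} \sum_{\substack{b,b'\in B\\ c,c'\in C}} \chi(a+b+c)\overline{\chi(a+b'+c')}\,.$$
Since $\chi$ is multiplicative, away from the negligible locus where a denominator vanishes the inner character becomes $\chi\bigl((a+b+c)(a+b'+c')^{-1}\bigr)$. By Freiman's theorem, $A \subset P$ for a proper generalized arithmetic progression $P$ of dimension $d \le (\log 2K)^{3+o(1)}$ and size $\abs{P} \le e^{C(K)}\abs{A}$; extending the outer sum from $A$ to $P$ costs only the factor $\abs{P}/\abs{A}$.

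Second, I split the resulting quadruple sum into a diagonal part (where $b+c=b'+c'$) and an off-diagonal part. Off-diagonal quadruples produce a rational function of $a$ that is not a perfect $d$-th power, so Weil's theorem applied componentwise along $P$ yields square-root cancellation; the total off-diagonal contribution is bounded by the Weil error, roughly $d\sqrt{p}\cdot\abs{B}^{2}\abs{C}^{2}$ up to the size of $P$. The diagonal contribution equals $\abs{P} \cdot \E^{+}(B,C)$ with
$$\E^{+}(B,C) := \#\ost{(b,b',c,c') \in B^{2}\times C^{2} : b+c=b'+c'}\,.$$

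Third, the hypothesis $\abs{B+C} < L\abs{B}$ only yields the lower bound $\E^{+}(B,C) \ge \abs{B}\abs{C}^{2}/L$; the needed \emph{upper} bound of the shape $\E^{+}(B,C) \le L\cdot\abs{B}^{2}\abs{C}^{2} p^{-\kappa}$ for an explicit $\kappa=\kappa(\delta,K)$ is supplied by the sum--product results of \cite{Shkr2}, which control higher mixed energies of sets with small sumset. Inserting this bound into the diagonal term and balancing it against the Weil error gives the threshold $\abs{A},\abs{B},\abs{C} > p^{12/31+\delta}$ and the final exponent $\tau = \delta^{2}(\log 2K)^{-3+o(1)}$, in which $(\log 2K)^{-3+o(1)}$ reflects the Freiman dimension $d$ and $\delta^{2}$ reflects the quadratic optimization of $\tau$ against the saving $\kappa$.

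The principal obstacle is the third step: extracting from \cite{Shkr2} exactly the mixed-energy bound needed, and then carrying out the balancing so that the interplay among the Freiman blow-up $e^{C(K)}$, the Weil error, and the upper bound on $\E^{+}(B,C)$ conspires to produce precisely the exponent $12/31$ with the stated dependence $\tau = \delta^{2}(\log 2K)^{-3+o(1)}$. The numerology $12/31$ is not robust to the input used: a weaker sum--product bound would force a worse threshold, and the final remark of the paper (giving $\zeta \gg \exp(-(\log p)^{\alpha})$ for $\alpha$ absolute) indicates that, once the optimization is carried out, the saving is only quasi-polynomial in $p$.
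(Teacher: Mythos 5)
Your proposal does not address the statement at all. The statement to be proved is the classical H\"older inequality for finite sums, $\abs{\sum_{k} x_k y_k}\le\skob{\sum_k\abs{x_k}^p}^{1/p}\skob{\sum_k\abs{y_k}^q}^{1/q}$ for conjugate exponents $\frac1p+\frac1q=1$, together with its special case, the Cauchy--Schwarz inequality. What you have written instead is a sketch of the proof strategy for the ternary character-sum theorem (Theorem \ref{ternarnic}): Freiman's theorem, Weil's bound, additive energies, the exponent $12/31$, and the quantity $\tau=\delta^2(\log 2K)^{-3+o(1)}$ have nothing to do with the inequality you were asked to establish. None of the steps you describe produces, or even approaches, the displayed inequality; the argument is simply about a different result in the paper.

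A correct proof of the stated lemma is elementary and independent of everything else in the paper. One standard route: by homogeneity one may normalize $\sum_k\abs{x_k}^p=\sum_k\abs{y_k}^q=1$, and then Young's inequality $\abs{x_k y_k}\le\frac{\abs{x_k}^p}{p}+\frac{\abs{y_k}^q}{q}$ (a consequence of the concavity of the logarithm, or of weighted AM--GM) gives $\sum_k\abs{x_k y_k}\le\frac1p+\frac1q=1$, which together with the triangle inequality $\abs{\sum_k x_k y_k}\le\sum_k\abs{x_k y_k}$ yields the claim; the Cauchy--Schwarz case is $p=q=2$. The paper treats this lemma as a known classical fact and supplies no proof, but your submission must at least engage with the inequality itself rather than with Theorem \ref{ternarnic}.
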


As we said in the introduction our proof relies on the Croot--Sisask lemma, see \cite{Croot-Sisask} and \cite{Sanders_survey}.

\begin{Lemma}[Croot--Sisask]\label{Croot-Sisask}
Let $\eps\in\skob{0,1}$, $K\ge1$, $q\ge2$ be real numbers, $A$ and $S$  be subsets of an abelian group $G$ such that $\abs{A+S}\le K\abs{A}$ and let $f\in L_q(G)$  be an arbitrary function. Then  there is  $s\in S$ and a set $T\subset S-s$, $\abs{T}\ge\abs{S}\skob{2K}^{-O(\eps^{-2}q)}$ such that for all $t\in T$ 
the following holds 
\begin{equation*}
\|(f\ast A)(x+t)-(f\ast A)(x)\|_{L_q(G)}\le\eps\abs{A}\|f\|_{L_q(G)}.
\end{equation*}
\end{Lemma}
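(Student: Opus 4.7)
The plan is to follow the random-sampling and pigeonhole proof of Croot and Sisask. The key identity is that if $z=(z_1,\ldots,z_k)\in A^k$ and one sets $f_z(x)\eqdef\frac{\abs A}{k}\sum_{i=1}^k f(x-z_i)$, then $f_z$ is an unbiased estimator of $f\ast A$ when the coordinates of $z$ are drawn uniformly from $A$, and it transforms very simply under translation: $f_z(x-s)=f_{z+s\mathbf 1}(x)$, where $z+s\mathbf 1\eqdef(z_1+s,\ldots,z_k+s)$. The small--doubling hypothesis $\abs{A+S}\le K\abs A$ confines the translated tuples $z+s\mathbf 1$ to the set $(A+S)^k$, whose size is at most $(K\abs A)^k$, and a pigeonhole argument will force many pairs $(z,s)$ to yield the same tuple, pinning the corresponding translates of $f\ast A$ together in $L_q$.

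Step 1 (random sampling and Marcinkiewicz--Zygmund). Fix $k=\lceil Cq\eps^{-2}\rceil$ for a suitable absolute constant $C$ and draw $z\in A^k$ uniformly at random. For each fixed $x$ the summands $Y_i(x)\eqdef\frac{\abs A}{k}\bigl(f(x-z_i)-\abs A^{-1}(f\ast A)(x)\bigr)$ are i.i.d.\ mean zero in $z_i$, with pointwise $q$-th moment controlled by $(\abs f^q\ast A)(x)$. Applying the Marcinkiewicz--Zygmund inequality for mean-zero i.i.d.\ sums (whose constant grows like $\sqrt q$) pointwise in $x$, and then summing over $x$ via Fubini together with the identity $\sum_x(\abs f^q\ast A)(x)=\abs A\,\|f\|_{L_q(G)}^q$, one obtains
$$\mathbb E_z\|f_z-f\ast A\|_{L_q(G)}^q\le\bigl(\tfrac{\eps}{2}\abs A\,\|f\|_{L_q(G)}\bigr)^q.$$
By Markov, the set of \emph{good} samples $\mathcal G\eqdef\{z\in A^k:\|f_z-f\ast A\|_{L_q(G)}\le\tfrac{\eps}{2}\abs A\,\|f\|_{L_q(G)}\}$ satisfies $\abs{\mathcal G}\ge\abs A^k/2$.

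Step 2 (pigeonhole and conclusion). Consider the map $\Phi:\mathcal G\times S\longrightarrow(A+S)^k$ sending $(z,s)\longmapsto z+s\mathbf 1$. The codomain has at most $(K\abs A)^k$ elements while the domain has at least $\abs A^k\abs S/2$, so some $w\in(A+S)^k$ has at least $\abs S/(2K^k)=\abs S(2K)^{-O(q\eps^{-2})}$ preimages under $\Phi$; let $T_0\subseteq S$ collect their $s$-coordinates. For every $s\in T_0$ the companion $z=w-s\mathbf 1$ lies in $\mathcal G$, and the key identity gives $f_z(x-s)=f_w(x)$, whence
$$\|(f\ast A)(\cdot-s)-f_w\|_{L_q(G)}=\|f\ast A-f_z\|_{L_q(G)}\le\tfrac{\eps}{2}\abs A\,\|f\|_{L_q(G)}.$$
Fix any $s_0\in T_0$ and set $T\eqdef T_0-s_0\subset S-s_0$. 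For $t=s-s_0\in T$ the triangle inequality and translation invariance of $L_q$ give
$$\|(f\ast A)(\cdot+t)-(f\ast A)\|_{L_q(G)}=\|(f\ast A)(\cdot-s_0)-(f\ast A)(\cdot-s)\|_{L_q(G)}\le\eps\abs A\,\|f\|_{L_q(G)},$$
and $\abs T=\abs{T_0}\ge\abs S(2K)^{-O(q\eps^{-2})}$, which is the claim of the lemma.

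The main obstacle is the quantitative Marcinkiewicz--Zygmund step: one has to verify with the correct constants that $k=O(q\eps^{-2})$ samples genuinely suffice to push the $L_q$ error below $\tfrac{\eps}{2}\abs A\|f\|_{L_q(G)}$. This uses the sharp form of the inequality whose moment constant grows like $\sqrt q$ for mean-zero i.i.d.\ sums, a careful pointwise estimate of $\mathbb E_{z_1}\abs{Y_1(x)}^q$ in terms of $(\abs f^q\ast A)(x)$, and the Fubini identity above to convert the pointwise bound into a global $L_q(G)$ bound. The pigeonhole step is, by contrast, essentially automatic once the doubling hypothesis is invoked.
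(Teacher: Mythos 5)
Your argument is correct and is essentially the standard Croot--Sisask proof (random sampling of $k=O(q\eps^{-2})$ translates, a Marcinkiewicz--Zygmund moment bound combined with the Fubini identity $\sum_x(|f|^q\ast A)(x)=|A|\,\|f\|_{L_q}^q$, then pigeonholing the tuples $z+s\mathbf 1$ inside $(A+S)^k$), which is exactly the argument of the cited sources; the paper itself states the lemma without proof, quoting \cite{Croot-Sisask} and \cite{Sanders_survey}. No gaps: the moment estimate with constant $O(\sqrt q)$ indeed yields the stated $(2K)^{-O(\eps^{-2}q)}$ density of almost periods, and your translation step recovering $T\subset S-s_0$ is the standard conclusion.
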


\section*{Some preliminary lemmas}

In paper~\cite{Shkr2} the following two important results were proved.

\begin{Th}\label{energy}
Suppose that $A,B,C\subset\mathbb{F}_p$ are sets with $\abs A\abs B\abs C=O(p^2)$. Then
$$
\abs{\ost{(a_1,a_2,b_1,b_2,c_1,c_2)\in A^2\times B^2\times C^2\,:\,a_1(b_1+c_1)=a_2(b_2+c_2)}}
    \ll
$$
$$
    \ll\skob{\abs A\abs B\abs C}^{\frac32}+\abs A\abs B\abs C\max\ost{\abs A,\abs B,\abs C}.
$$
\end{Th}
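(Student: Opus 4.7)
Denote by $N$ the number of sextuples in the left--hand side. The plan is to translate the defining equation $a_1(b_1+c_1)=a_2(b_2+c_2)$ into a point--plane incidence problem in $\mathbb{F}_p^3$ and then to apply the Rudnev point--plane incidence theorem over $\mathbb{F}_p$; the hypothesis $\abs A\abs B\abs C=O(p^2)$ is exactly what is required so that the product $\abs P\abs\Pi\le(\abs A\abs B\abs C)^2=O(p^4)$ satisfies the non--degeneracy condition of Rudnev's bound.

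Expanding the equation as $a_1b_1+a_1c_1-a_2b_2-a_2c_2=0$, one of the factors in each product should be absorbed into a \emph{point} coordinate so as to linearise the relation. For instance, the substitution $(X,Y,Z):=(a_1,\,a_1c_1,\,a_2)$ turns the equation into
$$
b_1\,X+Y-(b_2+c_2)\,Z=0\,,
$$
which defines a plane $\pi_{b_1,b_2,c_2}\subset\mathbb{F}_p^3$. Setting
$$
P=\ost{(a,ac,a')\,:\,a,a'\in A,\ c\in C}\,,\qquad \Pi=\ost{\pi_{b_1,b_2,c_2}\,:\,b_1,b_2\in B,\ c_2\in C}\,,
$$
one has $\abs P\le\abs A^2\abs C$ and $\abs\Pi\le\abs B^2\abs C$, and every solution of the original equation produces an incidence of a point from $P$ with a plane from $\Pi$, so $N\le I(P,\Pi)$ up to easily--handled contributions from multiplicities (essentially coming from $a_1=0$).

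Rudnev's incidence theorem then yields an estimate of the shape
$$
I(P,\Pi)\ll\skob{\abs P\abs\Pi}^{3/4}+k\cdot\max\ost{\abs P,\abs\Pi}\,,
$$
where $k$ is the maximum number of points of $P$ lying on any single line of $\mathbb{F}_p^3$. The first term evaluates to $(\abs A^2\abs B^2\abs C^2)^{3/4}=(\abs A\abs B\abs C)^{3/2}$, matching the main term of the claim. The heart of the argument --- and the main obstacle --- will be the combinatorial control of $k$: a case analysis of which lines can contain many points of the structured set $P$ (the extremal configurations being degenerate lines parametrised by a single variable, for example those forcing $a_1=0$ or $c_1$ constant) should give $k\ll\max\ost{\abs A,\abs B,\abs C}$. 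To recover the symmetric second term $\abs A\abs B\abs C\cdot\max\ost{\abs A,\abs B,\abs C}$ of the theorem one can either symmetrise the substitution across the three possible choices of "pulled--out" variable, or handle the trivial degenerate contributions (such as $a_i=0$ and $b_i+c_i=0$) separately and absorb them into this second term.
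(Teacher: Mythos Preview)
The paper does not prove this theorem; it is quoted from \cite{Shkr2}, where the proof is indeed via Rudnev's point--plane incidence theorem, so your overall strategy matches. However, your execution of the main term contains a genuine gap.

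Rudnev's theorem does \emph{not} give $I(P,\Pi)\ll(\abs P\abs\Pi)^{3/4}+k\max\ost{\abs P,\abs\Pi}$. The correct statement (for $\abs P\le\abs\Pi$ and $\abs P=O(p^2)$) is $I(P,\Pi)\ll\abs\Pi\,\abs P^{1/2}+k\abs\Pi$, and the two coincide only when $\abs P=\abs\Pi$. With your encoding $P=\ost{(a_1,a_1c_1,a_2)}$ and $\Pi$ indexed by $(b_1,b_2,c_2)$ one has $\abs P\le\abs A^2\abs C$ and $\abs\Pi\le\abs B^2\abs C$; if, say, $\abs A<\abs B$, Rudnev's main term is $\abs A\,\abs B^2\abs C^{3/2}$, which exceeds the claimed $(\abs A\abs B\abs C)^{3/2}$ by the factor $(\abs B/\abs A)^{1/2}$. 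Your appeal to ``symmetrising across the three choices of pulled--out variable'' repairs the second term but not this one. The remedy used in \cite{Shkr2} is to pick a \emph{balanced} encoding with $\abs P=\abs\Pi=\abs A\abs B\abs C$: for example, points $(b_1,a_2,a_2c_2)$ and planes $a_1X-b_2Y-Z+a_1c_1=0$ indexed by $(a_1,b_2,c_1)$. Then the hypothesis $\abs A\abs B\abs C=O(p^2)$ is precisely Rudnev's threshold, the main term is genuinely $(\abs A\abs B\abs C)^{3/2}$, and a short case analysis of lines in the structured set $\ost{(b_1,a_2,a_2c_2)}$ (distinguishing which coordinates are constant along the line) gives $k\le\max\ost{\abs A,\abs B,\abs C}$, producing the second term. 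Your sketch of the collinearity argument is headed in the right direction, but it must be carried out for this balanced point set rather than the one you wrote down.
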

\begin{Th}\label{lines}
Let $P=A\times B$  be a set of  $n$ points of $\mathbb{F}_p^2$ and $\abs A,\abs B \le p^{\frac23}$. 
Then the set $P$ has  $O(n^{\frac34}m^{\frac23}+m+n)$ incidences with any $m$ lines.
\end{Th}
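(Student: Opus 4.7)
The plan is to invoke Rudnev's point--plane incidence theorem in $\mathbb{F}_p^3$ via a Stevens--de Zeeuw-style lift. The hypothesis $|A|,|B|\le p^{2/3}$ is used precisely so that the lifted point cloud has cardinality $O(p^2)$, which is the regime in which Rudnev's estimate is nontrivial. Vertical lines will be handled separately (at most $|A|$ of them carry incidences, contributing at most $n$ in total, absorbed by the additive term).

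First I would dyadically decompose by richness of lines: for each $t\ge 1$ let $m_t$ denote the number of the given $m$ lines meeting $P=A\times B$ in between $t$ and $2t$ points. Once a nontrivial upper bound on $m_t$ in terms of $|A|,|B|,m,t$ is in hand, summing $t\,m_t$ over dyadic scales and balancing against the crude bounds $t\,m_t\le tm$ (for small $t$) and $t\,m_t\le n$ (for large $t$) will reproduce the stated $O(n^{3/4}m^{2/3}+n+m)$ after the standard optimization in $t$.

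The key nontrivial bound on $m_t$ would come from counting ordered pairs $((a_1,b_1),(a_2,b_2))\in(A\times B)^2$ lying on a common line of $L_t$. A line $\ell:\,y=\alpha x+\beta$ carrying at least $t$ points contributes $\gg t^2$ such pairs, and each pair satisfies the linear relation $b_1-b_2=\alpha(a_1-a_2)$. I would encode this as a point--plane incidence in $\mathbb{F}_p^3$ by setting
$$
\mathcal{P}=\{(a_1,a_2,b_1):a_1,a_2\in A,\,b_1\in B\},\qquad |\mathcal{P}|=|A|^2|B|,
$$
so that $|\mathcal{P}|=O(p^2)$ by the size assumption, together with the plane family
$$
\Pi=\{\pi_{\ell,b_2}:\ell\in L_t,\,b_2\in B\},\qquad \pi_{\ell,b_2}:\;x_3-b_2=\alpha(x_1-x_2),
$$
so that $|\Pi|=|B|\,m_t$ and $I(\mathcal{P},\Pi)\gg t^2 m_t$.

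Applying Rudnev's theorem to $(\mathcal{P},\Pi)$ then supplies the desired upper bound on $t^2 m_t$, the non-degeneracy input being furnished by the Cartesian-product shape of $\mathcal{P}$ (which forces every line of $\mathbb{F}_p^3$ to meet $\mathcal{P}$ in at most $\max(|A|,|B|)$ points). The main obstacle I anticipate is verifying carefully that the plane family $\{\pi_{\ell,b_2}\}$ is sufficiently generic---concretely, that no line of $\mathbb{F}_p^3$ is incident to an anomalous number of these planes, and that the map $(\ell,b_2)\mapsto\pi_{\ell,b_2}$ is essentially injective (which uses that the slope $\alpha$ is determined by $\ell$). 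Only with these checks does Rudnev's error term remain subdominant, and only then can the hypothesis $|A|,|B|\le p^{2/3}$ be cleanly applied to close the estimate.
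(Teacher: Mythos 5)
You should know at the outset that the paper contains no proof of Theorem \ref{lines}: it is imported verbatim from \cite{Shkr2}, so the only available comparison is with the argument there, which does go through Rudnev's point--plane theorem, but not through the pair count you set up. The concrete gap in your plan is the plane family. The plane $\pi_{\ell,b_2}\colon x_3-b_2=\alpha(x_1-x_2)$ depends only on the slope $\alpha$ of $\ell$ and on $b_2$, so parallel lines give the \emph{same} plane and $|\Pi|$ is (number of slopes)$\times|B|$, not $|B|m_t$; this by itself is survivable, since a pair of distinct points with $a_1\ne a_2$ still determines $\ell$, so the incidence lower bound $\gg t^2m_t$ stands. What is fatal is exactly the genericity you postponed: for fixed $b_2$ \emph{all} planes of the family contain the line $\{x_1=x_2,\ x_3=b_2\}$, and for fixed $\alpha$ the planes are parallel, hence share a projective line at infinity; so the maximal number of planes of $\Pi$ through a common line is of order $\max(|S_t|,|B|)$, $S_t$ being the slope set. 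Rudnev's theorem can now be oriented in only two ways. With the square root on the point set (degeneracy $=$ collinear points, which is indeed $\le\max(|A|,|B|)$ as you say), the bound is linear in $|\Pi|\asymp|B|m_t$, so $m_t$ cancels and you get no bound on $m_t$ at all. Dualizing to put the square root on $\Pi$ (the only way to see $m_t^{1/2}$), the relevant degeneracy is precisely ``planes through a common line'', and the term $k\,|\mathcal P|\gg\max(m_t,|B|)\,|A|^2|B|$ swamps the main term, leaving only the trivial $m_t\ll n^2/t^2$. So the checks you flagged as remaining are not technicalities: they fail for this family, and the estimate does not close.

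Independently of the degeneracy, a second-moment (pairs on a line) count of this shape cannot produce the stated exponents. Even granting the ideal main term $|\mathcal P|\,|\Pi|^{1/2}=|A|^2|B|^{3/2}m_t^{1/2}$, you would get $m_t\ll|A|^4|B|^3t^{-4}$, and the dyadic summation then yields an incidence bound of the shape $|A|\,|B|^{3/4}m^{3/4}+m+n$ (a Stevens--de Zeeuw type bound), which is never better than the claimed $(|A||B|)^{3/4}m^{2/3}$, the ratio being $|A|^{1/4}m^{1/12}\ge1$. The exponent $2/3$ in $m$ signals a third-moment input: by H\"older, $I\le m^{2/3}\skob{\sum_{\ell}i(\ell)^3}^{1/3}$, so what is really needed is a bound $O\skob{(|A||B|)^{9/4}}$ on the number of collinear triples of the grid $A\times B$ (with horizontal/vertical lines handled separately, and the terms $m+n$ absorbing poor lines), and this is what \cite{Shkr2} proves via Rudnev's theorem through their collision/energy machinery (compare Theorem \ref{energy} above). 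Your high-level instincts are sound --- Rudnev's theorem is the engine, and $|A|,|B|\le p^{2/3}$ is indeed what keeps the relevant set of size $O(p^2)$ --- but the specific lift both violates Rudnev's non-degeneracy requirements and, even if repaired, targets the wrong moment to reach $n^{3/4}m^{2/3}$.
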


The results above imply  two consequences.  

\begin{Lemma}\label{energy_esteem}
For any set $A\subset\mathbb{F}_p$ such that $\abs{A \pm A}\le K\abs A$ and $|A|^3 K = O(p^2)$ one has $\E(A)\ll K^{\frac32} \abs{A}^{\frac52}$.
\end{Lemma}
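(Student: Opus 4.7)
I plan to use Theorem \ref{energy} (from \cite{Shkr2}) with the triple of sets $(X,Y,Z) = (A,\, A-A,\, A)$, leveraging the hypothesis $|A-A|\le K|A|$ to make $A$ arise as a set of representable sums in many ways. Put $D = A-A$, so $|D|\le K|A|$ and the size condition $|X||Y||Z|=|A|^2|D|\le K|A|^3 = O(p^2)$ is exactly what is given. Theorem \ref{energy} then yields
\begin{equation*}
N := \abs{\ost{(a_1,a_2,d_1,d_2,a_3,a_4)\in A^2\times D^2\times A^2 \,:\, a_1(d_1+a_3)=a_2(d_2+a_4)}} \ll K^{3/2}|A|^{9/2}+K^2|A|^4,
\end{equation*}
where I have bounded $\max\{|A|,|D|,|A|\}\le K|A|$.

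The next step is the crucial observation linking this counting quantity to the multiplicative energy. For any $a\in A$ and any $a''\in A$, the element $d:=a-a''$ lies in $D$, so $a=d+a''$ is a valid $(D,A)$-representation of $a$. Hence every $a\in A$ has at least $|A|$ representations $a=d+a'$ with $d\in D$, $a'\in A$. Restricting the sums $d_i+a_i$ appearing in $N$ to lie in $A$ and using this lower bound on multiplicities gives
\begin{equation*}
N \;\ge\; \sum_{\substack{a_1,a_2,s_1,s_2\in A\\ a_1 s_1 = a_2 s_2}} r_{D+A}(s_1)\,r_{D+A}(s_2) \;\ge\; |A|^2\,\E(A).
\end{equation*}

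Combining the upper and lower bounds for $N$ immediately yields $\E(A)\ll K^{3/2}|A|^{5/2}+K^2|A|^2$. To finish, I check that the second term is harmless: if $K\le |A|$ then $K^2|A|^2\le K^{3/2}|A|^{5/2}$, whereas if $K>|A|$ then the claim $\E(A)\ll K^{3/2}|A|^{5/2}$ follows from the trivial bound $\E(A)\le |A|^3$ since $|A|^3\le K^{3/2}|A|^{5/2}$. Either way the desired estimate holds.

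The essentially only non-routine point is the choice of $(X,Y,Z)=(A,A-A,A)$: it is what converts the small-doubling hypothesis into many additive representations of elements of $A$, and so turns the sum-product type bound of Theorem \ref{energy} into a purely multiplicative-energy estimate. Apart from that, the argument is just bookkeeping and the Plünnecke-type inequality $|A-A|\le K|A|$ hidden in the hypothesis.
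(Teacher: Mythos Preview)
Your proof is correct and follows essentially the same approach as the paper's: both apply Theorem~\ref{energy} with two copies of $A$ and one copy of the sum/difference set (you use $D=A-A$, the paper uses $S=A+A$), and both exploit that each element of $A$ has $|A|$ representations of the required form to gain the factor $|A|^2$ relating $N$ to $\E(A)$. Your treatment is in fact slightly more careful, since you explicitly dispose of the secondary term $K^2|A|^2$ via the case split $K\lessgtr |A|$, whereas the paper absorbs it without comment.
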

\begin{proof}
Let  $S=A+A$ (the case $A-A$ is similar).
We have 
\begin{multline*}
\E(A) = \E^\times (A) = \abs{\ost{(a_1,a_2,a_3,a_4)\in A^4\,:\,a_1a_2=a_3a_4}}=\\ =\frac1{\abs A^2}\abs{\ost{(a_1,a_2,a_3,a_4,a_2',a_4')\in A^6\,:\,a_1(a_2+a_2'-a_2')=a_3(a_4+a_4'-a_4')}}\le\\
\le\frac1{\abs A^2}\abs{\ost{(a_1,a_3,a'_2,a'_4,s_1,s_2)\in A^4\times S^2\,:\,a_1(s_1-a'_2)=a_3(s_2-a'_4)}} \,.
\end{multline*}
Using Theorem \ref{energy}, we get 
\begin{equation*}
\E(A) \ll\frac{\skob{\abs A\abs A\abs S}^{\frac32}+\abs A^2\abs S^2}{\abs A^2}\ll K^{\frac32} \abs{A}^{\frac52} 
\end{equation*}
as required. 
\end{proof}

\begin{Lemma}\label{system_solution}
Suppose that 
$A,\,B,\,C\subset\mathbb{F}_p$ are any sets and $K,L$, $L\le p^{1/16}$ are positive numbers such that
\begin{gather*}
\abs{A},\abs{B},\abs{C}<\sqrt p,\\
\abs{A+A}<K|A|,\\
\abs{B+C}<L\abs B.
\end{gather*}
Then  the system of equations 
\begin{equation}\begin{cases}
\frac{b_1+c_1}a=\frac{b_1'+c_1'}{a'}\\
\frac{b_2+c_2}a=\frac{b_2'+c_2'}{a'}
\end{cases}
\label{system}
\end{equation}
has 
\begin{equation}\label{f:bound_E^t_3}
    O(K^\frac34 L^{\frac43}\abs A^{\frac54}\abs B^{\frac{17}6}\abs C^{\frac{10}3}\log^\frac12 p
        + |A|^2 |B|^2 |C|^2)
\end{equation}
solutions in the variables  $(a,a',b_1,b_1',b_2,b_2',c_1,c_1',c_2,c_2')\in A^2\times B^4\times C^4$.
\end{Lemma}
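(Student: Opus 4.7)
The plan is to reduce the counting problem to a ratio parametrisation and then separate the multiplicative and the additive information via Cauchy--Schwarz. Setting $\lambda := a/a'$, the system (\ref{system}) becomes $b_i + c_i = \lambda(b_i' + c_i')$ for $i = 1, 2$; once $\lambda$ is fixed, the two equations decouple into independent conditions on the disjoint tuples $(b_1, c_1, b_1', c_1')$ and $(b_2, c_2, b_2', c_2')$. With
$$R(\lambda) := |\{(a, a') \in A \times A : a = \lambda a'\}|, \qquad F(\lambda) := |\{(b, c, b', c') \in B \times C \times B \times C : b+c = \lambda(b'+c')\}|,$$
the total number of solutions to (\ref{system}) is $\mathcal{N} = \sum_{\lambda \in \F_p^*} R(\lambda) F(\lambda)^2$.

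Next I would apply Cauchy--Schwarz in the form
$$\mathcal{N} \;\le\; \Bigl(\sum_\lambda R(\lambda)^2\Bigr)^{1/2}\Bigl(\sum_\lambda F(\lambda)^4\Bigr)^{1/2} = \sqrt{\E^\times(A)}\cdot\Bigl(\sum_\lambda F(\lambda)^4\Bigr)^{1/2}.$$
The hypothesis $|A| < \sqrt p$ together with $|A+A| \le K|A| \le |A|^2$ forces $K \le |A| \le \sqrt p$ and hence $|A|^3 K \le |A|^4 = O(p^2)$, so Lemma \ref{energy_esteem} applies and yields $\E^\times(A) \ll K^{3/2}|A|^{5/2}$. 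Taking the square root produces the factor $K^{3/4}|A|^{5/4}$ visible in (\ref{f:bound_E^t_3}).

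The heart of the proof is the fourth-moment estimate on $F$. Writing $r(x) := |\{(b, c) \in B \times C : b+c = x\}|$ and $S := B+C$ (of size $\le L|B|$), one has $F(\lambda) = \sum_x r(x)\,r(\lambda^{-1} x)$, so $\sum_\lambda F(\lambda)^4$ is an eighth-order weighted moment of $r$. I would decompose $S = \bigsqcup_k S_k$ dyadically with $S_k := \{x \in S : r(x) \asymp 2^k\}$, thereby expressing $\sum F^4$ as a weighted sum of mixed multiplicative energies of the Cartesian product sets $S_j \times S_k \subset \F_p^2$. Since $|S_k| \le L|B| < p^{2/3}$ (using $L \le p^{1/16}$ and $|B| < \sqrt p$), the Szemer\'edi--Trotter-type incidence inequality of Theorem \ref{lines} applies to these Cartesian point sets, and Theorem \ref{energy} is invoked in addition to leverage the $(b+c)$-structure of the elements of $S$. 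Summing the resulting estimates across the $O(\log p)$ dyadic scales, using the global constraints $\sum_k 2^k |S_k| \asymp |B||C|$ and $|S_k| \le L|B|$, I expect to obtain
$$\sum_\lambda F(\lambda)^4 \;\ll\; L^{8/3} |B|^{17/3}|C|^{20/3}\log p \;+\; K^{-3/2}|A|^{3/2}|B|^4|C|^4\,,$$
so that the Cauchy--Schwarz inequality above reproduces both summands of (\ref{f:bound_E^t_3}); the trivial term $|A|^2|B|^2|C|^2$ originates from the linear $n + m$ error in Theorem \ref{lines}.

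The main obstacle will be the dyadic bookkeeping in the fourth-moment estimate: extracting the asymmetric exponents $|B|^{17/6}$ and $|C|^{10/3}$ from the symmetric Szemer\'edi--Trotter bound $O(n^{3/4}m^{2/3})$ requires careful use of the asymmetric hypothesis $|B+C| \le L|B|$, as opposed to a symmetric doubling condition. The factor $\log^{1/2} p$ in (\ref{f:bound_E^t_3}) is the unavoidable artefact of the dyadic pigeonholing.
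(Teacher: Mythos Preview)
Your reduction to $\mathcal N=\sum_\lambda R(\lambda)F(\lambda)^2$ and the use of $\E^\times(A)\ll K^{3/2}|A|^{5/2}$ via Lemma~\ref{energy_esteem} match the paper, but two points in your plan do not hold up. First, the term $|A|^2|B|^2|C|^2$ does \emph{not} come from the linear error in Theorem~\ref{lines}; it is the count of degenerate solutions with $b_i+c_i=b_i'+c_i'=0$, which the paper removes at the outset. If you do not strip these first, every $\lambda$ has $F(\lambda)\ge|B\cap(-C)|^2$, so $\sum_\lambda F(\lambda)^4$ picks up a term of order $p$ that your global Cauchy--Schwarz cannot afford. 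This is also why your claimed secondary term $K^{-3/2}|A|^{3/2}|B|^4|C|^4$ inside $\sum_\lambda F(\lambda)^4$ is impossible: $F$ is determined by $B$ and $C$ alone and cannot depend on $A$ or $K$.

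Second, the paper does not apply Cauchy--Schwarz globally. It first replaces $F(\lambda)=g(\lambda)$ by the coarser $f(\lambda)=|\{(b,c,s)\in B\times C\times S:\lambda=(b+c)/s\}|$ via $g\le|C|f$, then splits the $\lambda$-range at a threshold $\omega$ (chosen in terms of $|A|,|B|,|C|,|S|,K$). On $\{f\le\omega\}$ one bounds $\sum g^2h$ directly by $\omega^2|C|^2\sum h=\omega^2|C|^2|A|^2$, with $\omega$ tuned so that this is dominated by the main term; only on $\{f>\omega\}$ does one use Cauchy--Schwarz and then a dyadic decomposition of the \emph{level sets} $W_\tau=\{f\ge\tau\}$ of $f$ (not of $r$). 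The threshold is essential because Theorem~\ref{lines} requires the point set $W_\tau\times B$ to have both factors of size at most $p^{2/3}$, and $|W_\tau|\le|\Lambda_2|\le|B||C||S|/\omega\le p^{2/3}$ is exactly how this is secured. Your dyadic scheme on the values of $r$ does not obviously produce Cartesian point sets satisfying this size constraint, and without the threshold the incidence bound is not available for small~$\tau$.
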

\begin{proof}
Clearly, the number of trivial solutions $b_1=-c_1$, $b'_1=-c'_1$, $b_2=-c_2$, $b'_2=-c'_2$ and $a_1,a_2\in A$ are any numbers does not exceed
$$
   |A|^2 |B\cap(-C)|^4 \le |A|^2 |B|^2 |C|^2 
$$
and this gives us the second term in (\ref{f:bound_E^t_3}).
Below we will assume that all numerators in (\ref{system}) are nonzero.

Let $S = B+C$ and for any  $\lambda\in\mathbb{F}_p$ put 
\begin{gather*}
f(\lambda) = \abs{\ost{(b,c,s)\in B\times C\times S\,:\,\lambda = \frac{b+c}s}} \,,\\
g(\lambda) = \abs{\ost{(b,b',c,c')\in B^2\times C^2\,:\,\lambda = \frac{b+c}{b'+c'}}} \,,\\
h(\lambda) = \abs{\ost{(a,a')\in A^2\,:\,\lambda = \frac a{a'}}} \,.
\end{gather*}
Obviously, each element $s$ of the set $S$ has at most $\abs C$  representations of the form $s=b+c$, where $b\in B$ and $c\in C$ and, hence, for any  $\lambda$ one has 
\begin{equation}
g(\lambda)\le\abs C f(\lambda) \,.\label{g<f}
\end{equation}
Let $\omega^2 = |C|^{4/3} |B|^{3/2} |S|^{4/3} |A|^{-3/4} K^{3/4}$.
Consider two sets 
$$
    \Lambda_1=\ost{\lambda\in\mathbb{F}_p\,:\,f(\lambda)\le
\omega
},
\quad
    \Lambda_2=((B+C)/S) \setminus\Lambda_1 \,.
$$
Since 
$$
\omega
\abs{\Lambda_2}\le\sum_{\lambda\in\Lambda_2}f(\lambda)\le\sum_{\lambda\in\mathbb{F}_p}f(\lambda)=\abs B\abs C\abs S
$$
it follows that 
\begin{equation}\label{f:L_2}
\abs{\Lambda_2}\le |B| |C| |S| \omega^{-1}
\le p^{\frac23} \,.
\end{equation}
Indeed the last inequality is equivalent to 
$$
    |B|^{1/2} |C|^{2/3} |S|^{2/3} |A|^{3/4} K^{-3/4} \le p^{4/3} 
$$
which is true because of the conditions  $|A|,|B|,|C| < \sqrt{p}$ and $|S| \le L|B| < p^{9/16}$.

Further, the systems of the equations (\ref{system}) can be rewritten in an equivalent form, namely, 
$$\frac{a}{a'}=\frac{b_1+c_1}{b_1'+c_1'}=\frac{b_2+c_2}{b_2'+c_2'} \,.$$
Whence the number of its solutions equals 
\begin{equation}
    \sum_{\lambda\in\mathbb{F}_p}g(\lambda)^2h(\lambda)
        =
            \sum_{\lambda\in\Lambda_1}g(\lambda)^2h(\lambda)+\sum_{\lambda\in\Lambda_2}g(\lambda)^2h(\lambda) \,.
\label{answer}
\end{equation}
Foremost let us estimate the first sum in (\ref{answer})
\begin{multline}
\sum_{\lambda\in\Lambda_1}g(\lambda)^2h(\lambda)
    \le
        \sum_{\lambda\in\Lambda_1}\abs{C}^2f(\lambda)^2h(\lambda)
            \le\\ \le\sum_{\lambda\in\Lambda_1}\abs{C}^2 \omega^2 h(\lambda)
                \le \omega^2 \abs{C}^2\sum_{\lambda\in\mathbb{F}_p}h(\lambda)=
                \omega^2 |A|^2 |C|^2 \,.
\label{first_sum}
\end{multline}
Further using the Cauchy--Schwarz inequality, we get for the second sum in (\ref{answer})
\begin{equation}
\sum_{\lambda\in\Lambda_2}g(\lambda)^2h(\lambda)\le\skob{\sum_{\lambda\in\Lambda_2}g(\lambda)^4}^{\frac12}\skob{\sum_{\lambda\in\Lambda_2}h(\lambda)^2}^{\frac12}.
\label{after_kbsh}
\end{equation}
By the assumption $|A|<\sqrt{p}$ and hence $|A|^3 K = O(p^2)$.
Thus by Lemma \ref{energy_esteem}, we obtain 
$$
\sum_{\lambda\in\Lambda_2}h(\lambda)^2\le\sum_{\lambda\in\mathbb{F}_p}h(\lambda)^2
    =\abs{\ost{(a_1,a_2,a_3,a_4)\,:\,\frac{a_1}{a_2}=\frac{a_3}{a_4}}}
        =
$$
\begin{equation}
        =\E(A)\ll K^\frac32 \abs{A}^{\frac52}.
\label{h_esteem}
\end{equation}
For any $\tau \ge \omega$ consider the set 
$$W_{\tau} = \ost{\lambda\in\Lambda_2\,:\,f(\lambda)\ge\tau} \,.$$
Take the set of points $P=W_{\tau}\times B$ in $\mathbb{F}^2_p$ and the set of lines $$\mathcal{L}=\ost{sx=y+c\,:\,(s,c)\in S\times C} \,.$$
Because $\abs{W_{\tau}},\abs B \le p^{\frac23}$ it follows that the number of incidences between the points $P$ and the lines $\mathcal{L}$ can be estimated by Theorem \ref{lines} as 
\begin{equation}O\skob{(\abs{W_{\tau}}\abs B)^{\frac34}(\abs S\abs C)^{\frac23}+\abs{W_{\tau}}\abs B+\abs S\abs C}.\label{incidents}\end{equation}
Further, using a trivial bound 
$|W_\tau| \le |S| |B| |C| \tau^{-1} \le |S||B||C| \omega^{-1}$, we see that the inequality
$$(\abs{W_{\tau}}\abs B)^{\frac34}(\abs S\abs C)^{\frac23}
\gg \abs{W_{\tau}}\abs B $$
is followed  from 
\begin{equation}\label{tmp:16.05.2016_2}
    \omega^3 |S|^5 |C|^5 \gg |B|^6 \,.
\end{equation}
Let us prove that the last bound takes place.
Indeed, the number of the solutions of equation (\ref{system}) can be estimated by Theorem \ref{energy} and formulas (\ref{g<f}),  (\ref{answer}) as 
$$
    |A| \sum_{\lambda} g^2 (\lambda) \le |A| |C|^2 \sum_{\lambda} f^2 (\lambda) \ll |A| |C|^2 (|C||S||B|)^{3/2} 
$$
because of $|S| \le L |B| < p^{9/16} \le p^{2/3}$.
Hence, in the light of the 
required  
estimate (\ref{f:bound_E^t_3}), we can assume that 
$$
    |S| |C| \gg |A|^{3/2} \,.
$$
But then we have  $\omega \ge |B|^{3/4}$ and thus inequality (\ref{tmp:16.05.2016_2})
holds immediately. 

Further   if
\begin{equation}\label{tmp:16.05.2016_1}
    (\abs{W_{\tau}}\abs B)^{\frac34}(\abs S\abs C)^{\frac23}
\gg
\abs S\abs C 
\end{equation}
then the number of incidences (\ref{incidents}) equals 
$$O\skob{L^{\frac23}(\abs{W_{\tau}}\abs B)^{\frac34}(\abs B\abs C)^{\frac23}} \,.$$
Finally, in view of 
\begin{multline*}
\tau\abs{W_\tau}\le\sum_{\lambda\in W_\tau}f(\lambda)=\abs{\ost{(\lambda,b,s,c)\in W_\tau\times B\times S\times C\,:\,s\lambda=b+c}}\ll\\ \ll L^{\frac23}(\abs{W_{\tau}}\abs B)^{\frac34}(\abs B\abs C)^{\frac23} \,,
\end{multline*}
we get 
\begin{equation}
\abs{W_\tau}\ll\frac{L^{\frac83}\abs B^{\frac{17}3}\abs C^{\frac83}}{\tau^4} \,.
\label{Wtau}
\end{equation}
 But if (\ref{tmp:16.05.2016_1}) does not hold then because of, trivially, 
 $\tau \le |B||C|$ one can check bound (\ref{Wtau}) directly.
So, inequality (\ref{Wtau}) takes place.

As we noted before the maximal value of  $f(\lambda)$ is at most $\abs B\abs C<p$.
Using the fact and inequality~(\ref{Wtau}), we see that 
\begin{multline*}\sum_{\lambda\in\Lambda_2}f(\lambda)^4=\sum_{j=1}^{\lceil\log p\rceil}\sum_{\substack{\lambda\in\Lambda_2\,:\\ 2^{j-1}\le f(\lambda)<2^j}}f(\lambda)^4\le\sum_{j=1}^{\lceil\log p\rceil}2^{4j}\abs{W_{2^{j-1}}}\ll\\ \ll\sum_{j=1}^{\lceil\log p\rceil}2^{4j}\frac{L^{\frac83}\abs B^{\frac{17}3}\abs C^{\frac83}}{2^{4(j-1)}}\ll L^{\frac83}\abs B^{\frac{17}3}\abs C^{\frac83}\log p \,.
\end{multline*}
Applying simple bound (\ref{g<f}), we obtain 
\begin{equation}
\sum_{\lambda\in\Lambda_2}g(\lambda)^4\le\abs C^{4}\sum_{\lambda\in\Lambda_2}f(\lambda)^4\ll L^{\frac83}\abs B^{\frac{17}3}\abs C^{\frac{20}3}\log p  \,.
\label{g_esteem}
\end{equation}
Combining inequalities~(\ref{after_kbsh}),~(\ref{h_esteem})~and~(\ref{g_esteem}),
we get
\begin{equation}
\sum_{\lambda\in\Lambda_2}g(\lambda)^2h(\lambda)\ll K^\frac34 L^{\frac43}\abs A^{\frac54}\abs B^{\frac{17}6}\abs C^{\frac{10}3}\log^\frac12 p  \,.
\label{second_sum}
\end{equation}
Altogether from (\ref{answer}),~(\ref{first_sum}), (\ref{second_sum}) and our choice of the parameter $\omega$,
we have 
$$\sum_{\lambda\in\mathbb{F}_p}g(\lambda)^2h(\lambda)\ll K^\frac34 L^{\frac43}\abs A^{\frac54}\abs B^{\frac{17}6}\abs C^{\frac{10}3}\log^\frac12 p
    +
        \omega^2 |A|^2 |C|^2
            \ll
$$
$$
    \ll
        K^\frac34 L^{\frac43}\abs A^{\frac54}\abs B^{\frac{17}6}\abs C^{\frac{10}3}\log^\frac12 p \,.
$$
This completes the proof of the lemma.
\end{proof}

Weil's Theorem implies the following result.

\begin{Lemma}\label{davenport}
For any nontrivial character $\chi$, an arbitrary  set  $I\subset\mathbb{F}_p$ and a positive integer $r$ one has
$$\sum_{u_1,u_2\in \mathbb{F}_p}\abs{\sum_{t\in I}\chi(u_1+t)\overline{\chi}(u_2+t)}^{2r}< p^2\abs{I}^rr^{2r}+4r^2p\abs{I}^{2r} \,.$$
\end{Lemma}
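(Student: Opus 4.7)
The plan is to open up the $2r$-th power with two vectors of dummy variables $t=(t_1,\ldots,t_r),s=(s_1,\ldots,s_r)\in I^r$ and then to apply Weil's theorem to the inner character sums in $u_1$ and $u_2$. Writing $|z|^{2r}=z^r\bar z^r$, we have
\[
    \abs{\sum_{t\in I}\chi(u_1+t)\overline{\chi}(u_2+t)}^{2r}
    =\sum_{t,s\in I^r}\prod_{i=1}^r\chi(u_1+t_i)\overline{\chi}(u_2+t_i)\overline{\chi}(u_1+s_i)\chi(u_2+s_i),
\]
and after swapping the orders of summation the sum over $u_2$ turns out to be the complex conjugate of the sum over $u_1$. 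Hence, letting $d$ be the order of $\chi$ and setting $f_{t,s}(u):=\prod_{i=1}^r(u+t_i)(u+s_i)^{d-1}$, the quantity to estimate equals
\[
    \sum_{t,s\in I^r}\abs{\sum_{u\in\F_p}\chi(f_{t,s}(u))}^2.
\]

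Next I would split the pairs $(t,s)$ according to whether $f_{t,s}$ is a $d$-th power. A direct inspection of multiplicities shows that this occurs if and only if $m_t(x)\equiv m_s(x)\pmod d$ for every $x\in\F_p$, where $m_t(x)=\#\{i:t_i=x\}$ and similarly for $s$; call such pairs \emph{diagonal}, and the remaining ones \emph{generic}. On generic pairs the polynomial $f_{t,s}$ has at most $2r$ distinct roots and is not a $d$-th power, so Weil's theorem gives $\abs{\sum_u\chi(f_{t,s}(u))}\le(2r-1)\sqrt p\le 2r\sqrt p$; their total contribution is at most $4r^2p|I|^{2r}$, which produces the second term. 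On diagonal pairs only the trivial bound $p$ is available, so their contribution is at most $p^2$ times their cardinality, and it remains to show that this cardinality does not exceed $r^{2r}|I|^r$.

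The main technical obstacle is this last combinatorial count of diagonal pairs. When the order $d$ exceeds $r$ the congruence $m_t\equiv m_s\pmod d$ forces $m_t=m_s$ exactly, whence $s$ is a permutation of $t$ and one has at most $r!\,|I|^r\le r^{2r}|I|^r$ diagonal pairs immediately. For small $d$ (most importantly $d=2$) the multiplicity vectors may genuinely differ by nonzero multiples of $d$; to handle this case I would fix $t$, enumerate admissible multiplicity functions $m_s$ with $m_s\equiv m_t\pmod d$ and $\sum_x m_s(x)=r$, and bound the number of $r$-tuples realising each such $m_s$ by the multinomial $r!/\prod_x m_s(x)!\le r^r$. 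A crude count of both factors still yields the bound $r^{2r}|I|^r$ on the diagonal pairs, and combining the generic and diagonal contributions completes the proof.
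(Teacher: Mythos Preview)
Your overall strategy---expand the $2r$-th power, rewrite the $u_1,u_2$ sums as $\bigl|\sum_u\chi(f_{t,s}(u))\bigr|^2$, then split according to whether Weil applies---is exactly what the paper does. The only difference is the criterion you use to isolate the ``bad'' tuples and the ensuing combinatorial count.

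The paper does not work with the exact $d$-th power condition. It uses the weaker sufficient criterion that among $t_1,\dots,t_r,s_1,\dots,s_r$ some value occurs exactly once; since $\gcd(d,p-2)=1$, a lone factor has exponent $1$ or $p-2$, neither divisible by $d$, and Weil applies. The complementary ``bad'' tuples are those in which \emph{every} value appears at least twice; such a $2r$-tuple has at most $r$ distinct entries and hence there are at most $|I|^r r^{2r}$ of them. This gives the diagonal term in one line.

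Your sharper diagonal set $\{m_t\equiv m_s\pmod d\}$ is a subset of the paper's bad set, so the same bound certainly holds---but the argument you sketch for the count has a gap. Fixing $t$ and enumerating admissible $m_s$ does not give a bound uniform in $t$: for $d=2$ and a $t$ with $m_t\equiv 0\pmod 2$ everywhere, the admissible $m_s$ can be supported on \emph{any} $\lfloor r/2\rfloor$ points of $I$, so already the number of admissible $m_s$ for this single $t$ is of order $|I|^{r/2}$, not $O_r(1)$. Naively multiplying by $|I|^r$ choices of $t$ overshoots. The clean repair is precisely the paper's observation: if $m_t\equiv m_s\pmod d$ with $d\ge2$ and some value $v$ occurs in $(t,s)$, then either both $m_t(v),m_s(v)\ge1$, or one vanishes and the other is $\ge d\ge2$; either way $v$ occurs at least twice, so the combined tuple has at most $r$ distinct values and the count $|I|^r r^{2r}$ follows.
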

\begin{proof}
We have 
\begin{multline}
\sum_{u_1,u_2}\abs{\sum_{t\in I}\chi(u_1+t)\overline{\chi}(u_2+t)}^{2r}=\\
=\sum_{u_1,u_2}\sum_{t_1,\ldots,t_{2r}\in I}\chi\skob{\frac{(u_1+t_1)\cdots(u_1+t_r)(u_2+t_{r+1})\cdots(u_2+t_{2r})}{(u_2+t_1)\cdots(u_2+t_r)(u_1+t_{r+1})\cdots(u_1+t_{2r})}}=\\
=\sum_{t_1,\ldots,t_{2r}\in I}\sum_{u_1,u_2}\chi\skob{\frac{(u_1+t_1)\cdots(u_1+t_r)(u_2+t_{r+1})\cdots(u_2+t_{2r})}{(u_2+t_1)\cdots(u_2+t_r)(u_1+t_{r+1})\cdots(u_1+t_{2r})}}=\\
= \sum_{t_1,\ldots,t_{2r}\in I}\abs{\sum_{u\in\mathbb{F}_p}\chi\skob{\frac{(u+t_1)\cdots(u+t_r)}{(u+t_{r+1})\cdots(u+t_{2r})}}}^{2}.
\end{multline}
Consider a polynomial $$f(x)=(x+t_1)\cdots(x+t_r)(x+t_{r+1})^{p-2}\cdots(x+t_{2r})^{p-2} \,.$$
Then 
$$
    \abs{\sum_{u\in\mathbb{F}_p}\chi\skob{\frac{(u+t_1)\cdots(u+t_r)}{(u+t_{r+1})\cdots(u+t_{2r})}}}
        =
            \abs{\sum_{u\in\mathbb{F}_p}\chi\skob{f(u)}} \,.
$$
The polynomial $f(x)$ has at most $2r$ distinct roots.
The order $d$ of the character $\chi$ is a divisor of $p-1$ and hence it is coprime with $p-2$.
 Thus if there exists an element $t_k$ (let us call it a <<unique>> element) among the numbers  $\ost{t_i}$ with $\forall\,j\neq k$, $t_j\neq t_k$ then the polynomial $f(x)$ satisfies all conditions of Weil's Theorem and in the case, we have
$$
    \abs{\sum_{u\in\mathbb{F}_p}\chi\skob{f(u)}}<2r\sqrt p \,.
$$
Clearly, the number of tuples with a <<unique>> element does not exceed the total number of tuples $\ost{t_i}$, i.e. $\abs{I}^{2r}$. Now let us estimate the number of tuples $\ost{t_i}$ having no a <<unique>> element.
 Then, obviously, any element of such a tuple appears in it at least twice. Hence each of these tuples contains at most $r$ different elements  and thus the number of such 
 sequences 
 can be bounded as $\abs{I}^rr^{2r}$.
For any tuple without a <<unique>> element we estimate the sum $\abs{\sum\limits_{u\in\mathbb{F}_p}\chi\skob{f(u)}}$ by $p$. Whence we obtain a final bound
$$
    \sum_{t_1,\ldots,t_{2r}\in I}\abs{\sum_{u\in\mathbb{F}_p}\chi\skob{f(u)}}^{2}
        <
            \abs{I}^{2r}(2r\sqrt p)^2+\abs{I}^rr^{2r}p^2 \,.
$$
This completes the proof. 
\end{proof}

The proofs of statements which are similar to Lemma \ref{davenport} can be found in \cite{Chang} and in book \cite{IK}, see Corollary 11.24.

\section*{The proofs of the main results}

First of all, we prove Theorem~\ref{ternarnic} and after that show how it implies Theorem~\ref{main_theorem}. 

\begin{proof}[The proof of Theorem~\ref{ternarnic}]
We will assume that $|A|, |B|, |C| < \sqrt{p}$.
Clearly, one can suppose that the inequality $L\le p^{1/16}$ takes place otherwise it is nothing to prove
(see Remark \ref{p(d,K,L)} below about the dependence of the quantity  $p(\delta, K,L)$ on $L$ or just the current proof).
According the Freiman theorem on sets with small doubling 
there is a generalized arithmetic progression $A_1=a_0+P \subseteq \mathbb{F}_p$  of the dimension $d$, where $$P=\ost{\sum_{j=1}^dx_ja_j\,:\,x_j\in\floor{0,H_j-1}}$$
such that 
$$A\subset A_1$$
$$d\le C(K)$$
$$\abs{A_1}<e^{C(K)}\abs{A} \,.$$
Put 
$$\alpha=\frac{7\delta}{18d},\quad
r=\left\lceil\frac1\alpha \right\rceil\,.$$
Take the interval $I=\floor{1,p^\alpha}$ and the generalized progression $A_0$ of the dimension $d$ defined as 
$$A_0=\ost{\sum_{j=1}^dx_ja_j\,:\,x_j\in\floor{0,p^{-2\alpha}H_j}} \,.$$
Clearly, 
\begin{equation}
\abs{A_0}\ge p^{-2d\alpha}\abs{A_1}\ge p^{-2d\alpha}\abs{A}
\label{2}
\end{equation}
and 
\begin{equation}
\abs{A_0+A_0}\le 2^d\abs{A_0} \,.\label{A0+A0}
\end{equation}
Because of $A_0I\subseteq \ost{\sum\limits_{j=1}^dx_ja_j\,:\,x_j\in\floor{0,p^{-\alpha}H_j}}$ and hence
\begin{equation}\label{f:Bergess}
    A-A_0I \subseteq \ost{\sum\limits_{j=1}^dx_ja_j\,:\,x_j\in\floor{-p^{-\alpha} H_j, H_j}}
\end{equation}
we, clearly, get 
\begin{equation}\label{A-A_0I}
    \abs{A-A_0I}\le\skob{1+p^{-\alpha}}^d\abs{A_1}\le e^{C(K)}\skob{1+p^{-\alpha}}^d\abs{A}\le e^{C(K)}2^d\abs{A} \,.
\end{equation}
Let us fix $x\in A_0, y\in I$  and estimate the sum 
\begin{multline}\abs{\sum_{\substack{a\in A,\, b\in B,\\ c\in C}}\chi(a+b+c)}\le\sum_{a\in A}\abs{\sum_{\substack{b\in B,\\ c\in C}}\chi(a+b+c)}=\\ =\sum_{a\in A-xy}\abs{\sum_{\substack{b\in B,\\ c\in C}}\chi(a+b+c+xy)}\le\sum_{a\in A-A_0I}\abs{\sum_{\substack{b\in B,\\ c\in C}}\chi(a+b+c+xy)}.\label{9}\end{multline}
The numbers $x\in A_0$, $y\in I$ can be taken in such a way that the last sum in~(\ref{9}) does not exceed the mean, whence
\begin{equation}\abs{\sum_{\substack{a\in A,\, b\in B,\\ c\in C}}\chi(a+b+c)}\le\frac1{\abs{A_0}\abs{I}}\sum_{\substack{a\in A-A_0I,\\ x\in A_0,\, y\in I}}\abs{\sum_{\substack{b\in B,\\ c\in C}}\chi(a+b+c+xy)}.
\label{rand}\end{equation}

Now having  any fixed $a\in A-A_0I$, let us estimate the sum 
$$
    \sum_{x\in A_0, y\in I}\abs{\sum_{\substack{b\in B,\\ c\in C}}\chi(a+b+c+xy)}=\sum_{x\in A_0, y\in I}\abs{\sum_{\substack{b\in B_a,\\ c\in C}}\chi(b+c+xy)} \,.
$$
Here we have denoted $B_a=a+B$. 
By the Cauchy--Schwarz inequality, we get 
\begin{multline}\skob{\sum_{x\in A_0, y\in I}\abs{\sum_{\substack{b\in B_a,\\ c\in C}}\chi(b+c+xy)}}^2\le\\ \le\skob{\sum_{x\in A_0, y\in I}1}\skob{\sum_{x\in A_0, y\in I}\abs{\sum_{\substack{b\in B_a,\\ c\in C}}\chi(b+c+xy)}^2}=\\ =\abs{A_0}\abs{I}\skob{\sum_{\substack{x\in A_0,\, y\in I,\\ b_1,\, b_2\in B_a,\\ c_1,\, c_2\in C}}\chi(b_1+c_1+xy)\overline{\chi}(b_2+c_2+xy)}.
\label{kbsh}
\end{multline}
For any pair $(u_1,u_2)\in\mathbb{F}^2_p$ put 
$$\nu(u_1,u_2)=\abs{\ost{(b_1,b_2,c_1,c_2,x)\in B_a^2\times C^2\times A_0\,:\,\frac{b_1+c_1}x=u_1
\mbox{ and } \frac{b_2+c_2}x=u_2}} \,.$$
Then for any  $x\neq 0$, we have 
\begin{multline}
\sum_{\substack{x\in A_0, y\in I,\\ b_1,b_2\in B_a,\\ c_1,c_2\in C}}\chi(b_1+c_1+xy)\overline{\chi}(b_2+c_2+xy)=\\
=\sum_{\substack{x\in A_0, y\in I,\\ b_1,b_2\in B_a,\\ c_1,c_2\in C}}\chi((b_1+c_1)x^{-1}+y)\overline{\chi}((b_2+c_2)x^{-1}+y)=\\
=\sum_{u_1,u_2\in\mathbb{F}_p^2}\nu(u_1,u_2)\sum_{y\in I}\chi(u_1+y)\overline{\chi}(u_2+y)\le\\
\le\skob{\sum_{u_1,u_2}\nu(u_1,u_2)}^{1-\frac1r} \skob{\sum_{u_1,u_2}\nu(u_1,u_2)^2}^{\frac{1}{2r}} \times\\ \times \skob{\sum_{u_1,u_2}\abs{\sum_{t\in I}\chi(u_1+t)\overline{\chi}(u_2+t)}^{2r}}^{\frac1{2r}}.
\label{main_esteem}
\end{multline}
The inequality in  (\ref{main_esteem}) follows from the H\"{o}lder inequality  and the Cauchy--Schwarz inequality.
By Lemma~\ref{davenport}
\begin{multline}\skob{\sum_{u_1,u_2}\abs{\sum_{t\in I}\chi(u_1+t)\overline{\chi}(u_2+t)}^{2r}}^{\frac1{2r}}<\skob{p^2\abs{I}^rr^{2r}+4r^2p\abs{I}^{2r}}^{\frac1{2r}}\le\\ \le r\abs{I}^{\frac12}p^{\frac1r}+(2r)^{\frac1r}p^{\frac1{2r}}\abs{I}\le2rp^{\frac1{2r}}\abs{I}\,.\label{3.22}\end{multline}
The last inequality takes place because $\abs{I}\ge p^{\frac1{r}}$ and $r\ge 2$.
Further note that 
\begin{equation}
\sum_{u_1,u_2\in \mathbb{F}_p}\nu(u_1,u_2)=\abs{B}^2\abs{C}^2\abs{A_0} 
\label{3.19}
\end{equation}
and by Lemma~\ref{system_solution}, combining with inequalities (\ref{0}), (\ref{-1}), (\ref{2}), (\ref{A0+A0})
and condition (\ref{cond:first}),  we obtain 
\begin{multline}
\sum_{u_1,u_2\in \mathbb{F}_p}\nu(u_1,u_2)^2=\\
=\abs{\ost{(x,x',b_1,b_1',b_2,b_2',c_1,c_1',c_2,c_2')\,:\,\frac{b_i+c_i}x=\frac{b'_i+c_i'}{x'}\text{ ДКЪ $i=1,2$}}}\ll\\
\ll 2^{\frac34d} L^{\frac43}\abs{A_0}^{\frac54}\abs B^{\frac{17}6}\abs C^{\frac{10}3}\log^\frac12 p
    + |A_0|^2 |B|^2 |C|^2 \ll\\
\ll \skob{\abs{A_0}\abs B^2\abs C^2}^{2}2^{\frac34d} L^{\frac43}\abs{A_0}^{-\frac34}\abs B^{-\frac{7}6}\abs C^{-\frac{2}3}\log^\frac12 p\ll\\
\ll\skob{\abs{A_0}\abs B^2\abs C^2}^{2}2^{\frac34d} L^{\frac43}p^{\frac{3d\alpha}2-\skob{\frac{12}{31}+\delta}\skob{\frac34+\frac76+\frac23}}\log^{\frac12} p=\\
=
    \skob{\abs{A_0}\abs B^2\abs C^2}^{2}2^{\frac34d} L^{\frac43}p^{\frac{3d\alpha}2-\frac{31\delta}{12}-1}\log^{\frac12} p \,.
\label{3.20}
\end{multline}
Using estimates~(\ref{kbsh})---(\ref{3.20}),
we see that 
\begin{equation*}
\skob{\sum_{x\in A_0, y\in I}\abs{\sum_{\substack{b\in B_a,\\ c\in C}}\chi(b+c+xy)}}^2\ll\skob{\abs{A_0}\abs I\abs B\abs C}^2r2^{\frac {3d}{8r}} L^{\frac{2}{3r}}p^{\frac{3d\alpha}{4r}-\frac{31\delta}{24r}}\log^{\frac1{4r}} p
    \,.
\end{equation*}
Because $\alpha = \frac{7\delta}{18 d}$ and hence  $r\ge \frac1\alpha = \frac{18d}{7\delta}$, we obtain further 
\begin{equation}
\skob{\sum_{x\in A_0, y\in I}\abs{\sum_{\substack{b\in B_a,\\ c\in C}}\chi(b+c+xy)}}^2
\ll\skob{\abs{A_0}\abs I\abs B\abs C}^2\frac d\delta L^{\frac{7\delta}{27}}p^{-\frac{\delta}{r}}\log^{\frac1{4r}} p \,.
\label{3.24}
\end{equation}
Bound~(\ref{3.24}) takes place for any $a$ and thus inequalities  (\ref{A-A_0I}), (\ref{rand})
imply 
\begin{multline}
\abs{\sum_{\substack{a\in A,b\in B,\\ c\in C}}\chi(a+b+c)}\ll \sqrt{\frac d\delta} L^{\frac{7\delta}{54}}p^{-\frac{\delta}{2r}}\abs{A-A_0I}\abs{B}\abs{C}\log^{\frac1{8r}} p\ll\\ \ll \sqrt{\frac d\delta} L^{\frac{7\delta}{54}}2^de^{C(K)}p^{-\frac{7\delta^2}{72d}}\abs A\abs{B}\abs{C}\log^{\frac1{8r}} p \,.
\label{final_inequality}
\end{multline}
The theorem follows from (\ref{final_inequality})  if one takes $\tau=\frac{\delta^2}{100(C(K)+1)}$, for example. 
\end{proof}

\begin{note}
From inequality (\ref{final_inequality}) it is easy to find the quantity $p(\delta, K,L)$ in a concrete form.
Indeed, it is enough to choose $p$ such that
$\log p \gg \frac{C^2(K)}{\delta^2}$ and $\log p \gg \frac{C(K) \log L }{\delta}$.
It shows that we have subexponential dependence of the constants $K, L$ on $p$ in our theorem.
\label{p(d,K,L)}
\end{note}

\bigskip

\begin{proof}[The proof of the main theorem]
Let  $M>0$ be a real parameter which we will choose later.
Put $\eps=M \sqrt{\frac{\log 2K}{\delta\log p}}$.
Using Lemma \ref{Croot-Sisask} of Croot and Sissak with $q=2$ and $S=A$, $f=B$, we find $a\in A$ 
and a set $T\subset A-a$ such that $\abs{T}\ge \abs{A} \cdot \exp( -\eps^{-2} \log 2K)$ and for any $t\in T$ one has
\begin{equation*}
\|(A\ast B)(x+t)-(A\ast B)(x)\|_{2}\le\eps\abs{A}\abs{B}^{\frac12}.
\end{equation*}
Clearly, the cardinality of the support of the function $(A\ast B)(x+t)-(A\ast B)(x)$ 
does not exceed $2\abs{A+B}$ and hence by the H\"{o}lder inequality the following holds
\begin{multline}\label{main1}
\|(A\ast B)(x+t)-(A\ast B)(x)\|_{1}\le\\
\le\|(A\ast B)(x+t)-(A\ast B)(x)\|_{2}\skob{2\abs{A+B}}^{\frac12}\le\eps\skob{2L}^{\frac12}\abs{A}\abs{B}.
\end{multline}
The constant $M$ in the definition of $\eps$ can be chosen in such a way that $\abs T>p^{\frac{12}{31} +\frac{\delta}2}$.
Besides by the Pl\"{u}nnecke--Ruzsa triangle inequality, we get 
$$\abs{B-T}\le\frac{\abs{B+A}\abs{A+A}}{\abs A}\ll KL\abs B \,.$$
Thus the sets $A$, $B$ and $-T$ satisfy all conditions of Theorem \ref{ternarnic} with $A=A$, $B=B$ and $C=-T$.
Taking $p> p(\delta,K,L)$, we obtain
\begin{equation*}
\abs{\sum_{\substack{a\in A,\, b\in B,\, t\in T}}\chi(a+b-t)}=\abs{\sum_{t\in T,\, x\in \mathbb{F}_p}(A\ast B)(x+t)\chi(x)}<p^{-\tau}\abs A\abs B\abs T \,,
\end{equation*}
where $\tau=\tau(\delta, K)=\delta^2(\log 2K)^{-3+o(1)}$.
Whence for all sufficiently large $p$, namely, for 
\begin{equation}\label{f:cond_p,K_t4}
    \log p / \log \log p \gg\delta^{-2}(\log 2K)^{3+o(1)} \,,
\end{equation}
the inequality
\begin{equation*}
\tau \log p
\gg -\log (\eps L^{1/2}) \,,
\end{equation*}
takes place and thus 
\begin{equation}\label{main2}
\abs{\sum_{t\in T,\, x\in \mathbb{F}_p}(A\ast B)(x+t)\chi(x)}\le\eps L^{1/2} \abs A\abs B\abs T.
\end{equation}
Now, using bounds (\ref{main1}),~(\ref{main2}) and the triangle inequality,  we get 
\begin{multline}
\abs T\abs{\sum_{a\in A,\, b\in B}\chi(a+b)}=\abs{\sum_{t\in T,\, x\in \mathbb{F}_p}(A\ast B)(x)\chi(x)}=\\
=\abs{\sum_{t\in T,\, x\in \mathbb{F}_p}(A\ast B)(x+t)\chi(x)+\sum_{t\in T,\, x\in \mathbb{F}_p}((A\ast B)(x)-(A\ast B)(x+t))\chi(x)}\le\\
\le\abs{\sum_{t\in T,\, x\in \mathbb{F}_p}(A\ast B)(x+t)\chi(x)}\,+\sum_{t\in T}\|(A\ast B)(x+t)-(A\ast B)(x)\|_1\le\\
\le
    4\eps L^{\frac12} \abs A\abs B\abs T \,,
\end{multline}
hence
\begin{equation}
\abs{\sum_{a\in A,\, b\in B}\chi(a+b)}\le 4\eps L^{\frac12} \abs A\abs B\ll\sqrt{\frac{L\log 2K}{\delta\log p}}\abs A\abs B \,.
\end{equation}
This completes the proof of the theorem.
\end{proof}

\bigskip

In the beginning of writing the text we planed to use Burgess inclusion   (\ref{f:Bergess}) in the form
$$
    T+\{1,2,\dots,k\}\cdot T \subseteq (k+1)T \,,
$$
where the set of almost periods $T$ is given by the Croot--Sisask lemma.
Nevertheless it turns out that the arguments above are more effective.

\bigskip

We finish the paper showing how our Theorem \ref{ternarnic} implies Theorem \ref{t:Hanson}.

\bigskip

\begin{proof}[The scheme of the proof of Theorem \ref{t:Hanson}]
    We almost repeat the arguments from \cite{Hanson}.
    Assuming 
$$
    \abs{\sum_{a\in A,\, b\in B,\, c\in C}\chi(a+b+c)} \ge \eps |A||B||C|
$$
    one can easily derive from it that 
$$
    \E^{+} (B,C) := |\{ b+c = b'+c' ~:~ b,b' \in B,\, c,c' \in C \}| \gg (\eps \zeta)^2 (|B||C|)^{3/2}
$$
    and 
$$
    \E^{+} (A) \gg (\eps \zeta)^4 |A|^3 \,.
$$
    After that we use the Balog--Szemer\'{e}di--Gowers Theorem, see e.g. \cite{TV} and find subsets $A'\subseteq A$, $B'\subseteq B$, $C'\subseteq C$ such that 
    $|A'+A'| \ll (\eps \zeta)^{-M} |A'|$, $|B'+C'| \ll (\eps \zeta)^{-M} (|B'||C'|)^{1/2}$ and 
    $|A'| \gg (\eps \zeta)^{M} |A|$, $|B'| \gg (\eps \zeta)^{M} |B|$,  $|C'| \gg (\eps \zeta)^{M} |C|$.
    Here  $M>0$ is an absolute constant.
    Applying Theorem \ref{ternarnic} to the obtained sets and using simple average arguments (see \cite{Hanson}), we arrive to a contradiction.

    It is easy to count (see, e.g. condition (\ref{f:cond_p,K_t4}) from the proof of Theorem \ref{ternarnic} or Remark \ref{p(d,K,L)}) that a nontrivial estimate in formula (\ref{f:Hanson}) requires  the restriction of  the form 
$\zeta \gg \exp(-(\log p)^{\alpha})$, where $\alpha >0$ is an absolute constant.
\end{proof}

\bigskip

\noindent{A.S.~Volostnov\\
Moscow Institute of Physics and Technology (State University),\\
9 Institutskiy per., Dolgoprudny, Moscow Region, 141700, Russian Federation}
{\tt gyololo@rambler.ru}

\bigskip

\noindent{I.D.~Shkredov\\
Steklov Mathematical Institute of Russian Academy of Sciences,\\
ul. Gubkina, 8, Moscow, Russia, 119991}
\\
{\tt ilya.shkredov@gmail.com}

\end{document}